\theoremstyle{plain}
\theoremstyle{definition}
\newtheorem{thm}{Theorem}[section]
\newtheorem{cor}[thm]{Corollary}
\newtheorem{lem}[thm]{Lemma}
\theoremstyle{definition}
\newtheorem{defn}{Definition}[section]
\theoremstyle{remark}
\newtheorem{rem}{\bf Remark}[section]
\newcommand{\be}{\begin{equation}}
\newcommand{\ee}{\end{equation}}
\newcommand{\bea}{\begin{eqnarray}}
\newcommand{\eea}{\end{eqnarray}}
\newcommand{\ben}{\begin{eqnarray*}}
	\newcommand{\een}{\end{eqnarray*}}
\newcommand{\bt}{\begin{split}}
	\newcommand{\et}{\end{split}}
\newcommand{\bet}{\begin{equation}}
\newcommand{\mc}{\mathbb{C}}
\newcommand{\ra}{\rightarrow}
\newcommand{\beq}{\begin{equation*}}
\newcommand{\eeq}{\end{equation*}}
\newcommand{\bal}{\begin{aligned}}
\newcommand{\eal}{\end{aligned}}
\newcommand{\ddbar}{\partial \bar{\partial}}
\newcommand{\dbar}{\bar{\partial}}
\newcommand{\calH}{{\mathcal{H}}}%
\newcommand{\calI}{{\mathcal{I}}}%
\newcommand{\calJ}{{\mathcal{J}}}%
\newcommand{\calO}{{\mathcal{O}}}%
\newcommand{\frakm}{\mathfrak{m}}%
\newcommand{\supp}{{\textup{Supp}\,}}%
\newcommand{\loc}{{\textup{loc}}}%
\newcommand{\Ker}{{\textup{Ker}}}
\newcommand{\Dom}{{\textup{Dom}}}
\newcommand{\pd}{\partial}%
\renewcommand{\leq}{\leqslant}%
\newcommand{\inner}[1]{\langle#1\rangle}
\newcommand{\iinner}[1]{\langle\langle#1\rangle\rangle}
\g@addto@macro{\endabstract}{\@setabstract}
\newcommand{\authorfootnotes}{\renewcommand\thefootnote{\@fnsymbol\c@footnote}}%
\begin{document}
\title[Skoda's $L^2$ division]{Skoda's $L^2$ division theorem for $L^2$-optimal pairs}

\author[Z. Liu]{Zhuo Liu}
\address{Mathematical Science Research Center,  Chongqing University of Technology,  No. 69, Hongguang
Avenue, Banan District, Chongqing 400054, P. R. China.}
\email{liuzhuo@amss.ac.cn, liuzhuo@cqut.edu.cn}

\author[X.Zhang]{Xujun Zhang}
\address{School of Mathematics, Guangxi University, 100 Da Xue Dong Lu, Nanning, Guangxi, 530004, P. R. China.}
\address{Guangxi Center for Mathematical Research, Guangxi University, 100 Da Xue Dong Lu, Nanning, Guangxi, 530004, P. R. China.}
\email{xujunzhang@amss.ac.cn}

\subjclass[2020]{32W05, 32U05, 32D05, 32Q28, 14F18}

\keywords{$L^2$-optimal, Skoda's $L^2$ division, Domains of holomorphy}



\maketitle

\begin{abstract}
We establish a Skoda-type $L^2$ division theorem for \(L^2\)-optimal pairs, using a technique that combines a new Bochner-type inequality derived from the \(L^2\)-optimal conditions and Skoda's basic inequality.
As applications, we provide some new characterizations of domains of holomorphy.
\end{abstract}


\section{Introduction}

The classical $L^2$ theory in several complex variables tells us that the curvature positivity of holomorphic vector bundle indicates the solvability for $\dbar$-equations with $L^2$ estimates,
including H\"ormander's $L^2$ existence, Skoda's $L^2$ division, the Ohsawa-Takegoshi $L^2$ extension and so on.
Recently in \cite{DNW21,DNWZ22,DWZZ}, Deng, Ning, Wang, Zhang and Zhou established the converse $L^2$ theory by giving alternative characterizations of plurisubharmonicity/Nakano positivity in terms of various $L^2$-conditions for $\bar{\pd}$.
They proved that a $C^2$-smooth real-valued function is plurisubharmonic if and only if it satisfies the ``\textit{optimal $L^2$-estimate condition}''.

Let us recall the following definition.
\begin{defn}[\cite{Liu-Zhang}]
Let $D$ be a domain in $\mc^n$ and $\varphi$ an upper semi-continuous function on $D$.
  We say that  a pair $(D,\varphi)$ is \emph{$L^2$-optimal}
   if for any smooth strictly plurisubharmonic function $\phi$ and
   any K\"{a}hler metric $\omega$ on $D$,
   the equation $\dbar u=f$ can be solved on $D$ for any $\dbar$-closed $(n,1)$-form
   $f \in L^2_{n,1}(D;loc)$
   with the estimate:
   \begin{equation}\label{eq:a1}
   \int_D|u|^2_{\omega,\varphi} e^{-\phi} dV_\omega
   \leq
   \int_D \inner{B_\phi^{-1}f,f}_{\omega,\varphi} e^{-\phi} dV_{\omega},
  \end{equation}
   provided that the right-hand side is finite, where $B_\phi:=[i\ddbar \phi, \Lambda_{\omega}]$.

   In particular, we call $D$ is  $\emph{$L^2$-optimal}$ if the pair $(D,0)$ is \emph{$L^2$-optimal}.
\end{defn}

\begin{rem}

It is worthwhile mentioning that the integrals on both sides of \eqref{eq:a1} are independent of the choice of the K\"ahler metric $\omega$ since $u$ is an $(n,0)$-form and $f$ is an $(n,1)$-form.
\end{rem}

In \cite{Deng-Zhang}, Deng-Zhang showed that
any bounded $L^2$-optimal domain in $\mc^n$ with $C^2$-smooth boundary is pseudoconvex.
Deng-Zhang's approach
is a combination of
Deng-Ning-Wang's characterization in \cite{DNW21} and the Morrey trick involving the boundary terms in the Bochner-H\"ormander-Kohn-Morrey formula (\cite{Hor65}).
However,
their technique fails when the $C^2$-smooth regularity condition is relaxed and the domain is unbounded.

Inspired by \cite{Pflug-1, Pflug,Diederich-Pflug},
in our previous work \cite{Liu-Zhang}, we showed that every bounded $L^2$-optimal domain with a null thin complement $( \textup{i.e.}\; \mathring{\overline{D}}=D)$ in a Stein manifold is also Stein via showing that the $L^2$-optimal condition implies a new variant of the celebrated  Skoda $L^2$ division theorem \cite{Skoda}.
However,
the division theorem we obtained in \cite{Liu-Zhang} is too special to deal with unbounded $L^2$-optimal  domains.

In this paper, by twisting the weight function with $\log|g|^2$ and the metric with $i\ddbar\log|g|^2$ simultaneously, we obtain a Bochner-type inequality on $D\setminus\{g=0\}$.
 Then, by reformulating Skoda's basic inequality, we obtain the classical Skoda $L^2$ division theorem for $L^2$-optimal pair $(D,\varphi)$ as follows.

\begin{thm}\label{thm:skoda-division'}
Let $D$ be a domain in $\mc^n$, $\varphi$  an upper semi-continuous function on $D$ and $g=(g_1,\cdots,g_p)\in\calO(D)^{\oplus p}$.
	Assume that the pair $(D,\varphi)$ is $L^2$-optimal. Set $\varepsilon>0$ and $m=\min\{n,p-1\} $. Then for any holomorphic  $(n,0)$-form $f$  with $$\int_Di^{n^2} f\wedge\bar fe^{-\varphi-(1+m+\varepsilon)\log|g|^2}<+\infty,$$ there exist holomorphic  $(n,0)$-forms $(h_1,\cdots,h_p)$ on $D$
such that
$\sum_{j=1}^{p} h_j g_j=f$
and
\begin{equation*}
  \sum_{j=1}^{p}\int_Di^{n^2}h_j\wedge\bar h_je^{-\varphi-(m+\varepsilon)\log|g|^{2}}
  \le(1+\frac{m}{\varepsilon})\int_Di^{n^2}f\wedge\bar fe^{-\varphi-(1+m+\varepsilon)\log|g|^2}.
\end{equation*}
\end{thm}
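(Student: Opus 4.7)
My plan is to follow the classical Skoda division template, adapted to the $L^2$-optimal setting. Set $Z=\{g=0\}$ and $F_j:=\bar g_j f/|g|^2$ on $D\setminus Z$, so that $\sum_j g_j F_j = f$ pointwise. The tuple $F=(F_j)$ is not holomorphic, but $\alpha := \bar\partial F$ satisfies $\sum_j g_j\alpha_j = \bar\partial f = 0$; equivalently, at each point $\alpha$ is orthogonal to $g$ in $\mathbb{C}^p$. The goal is to produce a tuple $v = (v_j)$ with $v\perp g$ pointwise, $\bar\partial v = \alpha$, and a sharp weighted $L^2$ bound. Then $h_j := F_j - v_j$ will be holomorphic, will satisfy $\sum g_j h_j = f$, and will carry the desired estimate on $D\setminus Z$; strong integrability of $|h|^2 e^{-\varphi}|g|^{-2(m+\varepsilon)}$ near $Z$ will then force each $h_j$ to extend holomorphically across $Z$ by a standard removable-singularity argument for analytic sets.

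The engine driving this plan has two pieces, both signalled in the introduction. First, I would apply the $L^2$-optimal hypothesis for $(D,\varphi)$ with the simultaneous twist
\[ \phi \;\leadsto\; \phi_{\mathrm{sm}} + (m+\varepsilon)\log(|g|^2+\eta), \qquad \omega \;\leadsto\; \omega_0 + s\, i\partial\bar\partial\log(|g|^2+\eta), \]
where $\phi_{\mathrm{sm}}$ is a smooth strictly plurisubharmonic exhaustion, $\eta>0$ regularizes the singularity of $\log|g|^2$, and $s>0$ is chosen so that the perturbed form remains a K\"ahler metric. Passing $\eta\downarrow 0$ (and then $\phi_{\mathrm{sm}}\downarrow 0$) should yield a Bochner-type inequality on $D\setminus Z$ for the weight $\phi_1:=\varphi+(m+\varepsilon)\log|g|^2$ carrying a $\log|g|^2$-twist in its metric; this is precisely the new ingredient announced in the introduction. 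Second, I would invoke Skoda's basic pointwise inequality to bound
\[ \bigl\langle B_{\phi_1}^{-1}\alpha,\alpha\bigr\rangle\, e^{-\phi_1} \;\leq\; \Bigl(1+\frac{m}{\varepsilon}\Bigr)\,|f|^2 e^{-\varphi}|g|^{-2(1+m+\varepsilon)} \]
in the twisted metric; the rank-$\leq m$ identity for the curvature of $\log|g|^2$ restricted to the sub-bundle $g^\perp\subset \mathbb{C}^p$ supplies the combinatorial constant $1+m/\varepsilon$. Combining the two ingredients should produce the required $v$, and a projection onto $g^\perp$ (which commutes with $\bar\partial$ on $\alpha$ precisely because $\alpha\perp g$) will enforce $v\perp g$.

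The principal technical obstacle will be the derivation of the Bochner-type inequality from the $L^2$-optimal hypothesis. That hypothesis only supplies solvability of $\bar\partial$ for a \emph{smooth} weight and a \emph{smooth} K\"ahler metric on all of $D$, so I must regularize $\log|g|^2$, apply the hypothesis at each value of the regularization parameter, and control the limit with care; meanwhile, the contribution of the metric twist $s\, i\partial\bar\partial\log|g|^2$ must be propagated through the curvature operator $[i\partial\bar\partial \phi, \Lambda_\omega]$ so that it combines cleanly with the weight twist and reproduces the sharp constant in the final Bochner-type inequality on $D\setminus Z$. Once that is in hand, the remainder of the argument reproduces Skoda's original combinatorics, with $m=\min\{n,p-1\}$ encoding the rank bound from the curvature of $\log|g|^2$ on $g^\perp$, and the positive gap $\varepsilon$ producing the coefficient $1+m/\varepsilon$.
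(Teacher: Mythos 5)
Your two analytic ingredients are the paper's own: a Bochner-type inequality extracted from the $L^2$-optimal hypothesis after twisting the weight by $(m+\varepsilon)\log|g|^2$ and the metric by $i\ddbar\log|g|^2$ (with the regularization $\log(|g|^2+\eta)$ and a limit $\eta\downarrow0$, as in Lemma \ref{bbb} and Lemma \ref{lem:weak-basic-inequality'}), together with Skoda's basic inequality. But the framework differs: the paper never introduces the explicit non-holomorphic solution $F_j=\bar g_jf/|g|^2$. It runs Skoda's functional lemma (Lemma \ref{lem:functional-lemma-Skoda}) for the operators $T_1h=\sum g_jh_j$ and $T_2h=(\dbar h_j)_j$ and reduces everything to the a priori inequality $c\|u\|^2\le\|T_1^*u+T_2^*v\|^2$, so the constraint ``$h$ holomorphic with $\sum g_jh_j=f$'' comes for free from duality.

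The genuine gap in your sketch is the enforcement of $v\perp g$. Projecting an unconstrained solution of $\dbar v=\alpha$ onto the kernel subbundle $S=\{w:\sum g_jw_j=0\}$ does \emph{not} preserve the equation: $\dbar(\Pi v)=\Pi\alpha+(\dbar\Pi)v=\alpha+(\dbar\Pi)v$, and $(\dbar\Pi)v$ is governed by the second fundamental form of $S\subset\mc^p$, which is generically nonzero. That $\alpha\perp g$ gives $\Pi\alpha=\alpha$; it does not make $\dbar$ commute with $\Pi$. So you must either solve the $\dbar$-equation intrinsically in $S$ --- whose curvature acquires the negative Gauss--Codazzi term $-\beta^*\wedge\beta$, so your scalar Bochner inequality cannot simply be applied componentwise --- or, as the paper does, absorb precisely this contribution as the cross term $2\,\mathrm{Re}\langle T_2(T_1^*u),v\rangle$ in the functional-lemma inequality and dominate it with the reformulated Skoda inequality (Lemma \ref{for-skoda'}) after the choice $a=m+\varepsilon=bm$. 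Relatedly, the constant $1+m/\varepsilon$ does not arise from a pointwise bound on $\langle B_{\phi_1}^{-1}\alpha,\alpha\rangle$ as you assert; in the paper it comes from balancing the curvature surplus $a-bm=0$ against the Cauchy--Schwarz loss $1-1/b=\varepsilon/(m+\varepsilon)$ in that cross term. Until the second-fundamental-form/cross term is actually controlled, the argument is incomplete. (Your endgame --- holomorphic extension of the $h_j$ across $\{g=0\}$ from the weighted $L^2$ bound --- is fine and matches the paper's use of Lemma \ref{lem L2negligible}.)
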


\begin{rem}
\begin{enumerate}
  \item In \cite{Liu-Zhang}, a division theorem was established under the assumption that $\varphi$ is continuous and that $0 < \inf_{z \in D} |g|^2 \le \sup_{z \in D} |g|^2 < +\infty$,  with coefficients depending on $\inf_{z \in D} |g|^2$.
By comparison, Theorem \ref{thm:skoda-division'} relaxes the continuity condition on $\varphi$ to upper semicontinuity and removes the requirement that $0 < \inf_{z \in D} |g|^2 \le \sup_{z \in D} |g|^2 < +\infty$. Moreover,  Theorem \ref{thm:skoda-division'} shares the same $L^2$ estimates as the celebrated Skoda $L^2$ division theorem in \cite{Skoda}, where the coefficient $1+\frac{m}{\varepsilon}$ is uniform and optimal in a certain sense.
  \item Recently,
Li-Meng-Ning-Zhou in \cite{Li-Meng-Ning-Zhou} gave a converse of Skoda's $L^2$ division.
\end{enumerate}

\end{rem}

As an application of Theorem \ref{thm:skoda-division'},
we can remove the assumption of boundedness for the domain when characterizing pseudoconvex domains via $L^2$-optimal conditions in \cite{Deng-Zhang,Liu-Zhang}.

\begin{cor}\label{cor1}
  Let $D\subset\mc^n$ be a domain with a null thin complement. Then  $D$ is  $L^2$-optimal if and only if $D$ is  a domain of holomorphy.
\end{cor}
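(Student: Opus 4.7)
The plan is to treat the two implications separately. The direction $(\Leftarrow)$ is the classical H\"ormander $L^2$-existence theorem: for $D$ pseudoconvex (equivalently, a domain of holomorphy in $\mc^n$) and any smooth strictly plurisubharmonic $\phi$, the $\bp$-equation is solvable with the estimate in \eqref{eq:a1} taking $\varphi=0$, so $D$ is $L^2$-optimal. For the nontrivial direction $(\Rightarrow)$, I would argue by contradiction, using Theorem~\ref{thm:skoda-division'} applied to the \emph{constant} $(n,0)$-form $f=dz_1\wedge\cdots\wedge dz_n$ together with $g=(z_1,\ldots,z_n)$.

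Suppose $D$ is $L^2$-optimal with $\mathring{\overline D}=D$ but not a domain of holomorphy. In $\mc^n$ this means $D$ is not pseudoconvex, so the Kontinuit\"atssatz fails: there is a Hartogs figure $H\subset D$ whose holomorphic hull $\widehat H$ satisfies $\widehat H\not\subset D$. The null thin assumption then forces $\widehat H\cap(\mc^n\setminus\overline D)\neq\varnothing$, since otherwise the open set $\widehat H$ would lie in $\overline D$, hence in $\mathring{\overline D}=D$, contradicting $\widehat H\not\subset D$. Pick $q$ in this intersection; after translation assume $q=0$, so that $d(0,D)>0$. By Hartogs extension combined with $\mathring{\overline D}=D$, every $F\in\calO(D)$ extends uniquely to $\widetilde F\in\calO(D\cup\widehat H)$.

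Now apply Theorem~\ref{thm:skoda-division'} with $g=(z_1,\ldots,z_n)$ (so $p=n$, $m=n-1$), $\varphi=0$, any fixed $\varepsilon>0$, and $f=dz_1\wedge\cdots\wedge dz_n$. Since $0\notin\overline D$ we have $|z|\geq r>0$ on $D$, so $|z|^{-2(n+\varepsilon)}$ is bounded on any bounded part of $D$; at infinity,
\[
\int_{D\cap\{|z|>R\}}|z|^{-2(n+\varepsilon)}\,dV \leq C\int_R^{\infty}\rho^{-1-2\varepsilon}\,d\rho<\infty,
\]
so $\int_D i^{n^2}f\wedge\bar f\cdot|z|^{-2(n+\varepsilon)}<\infty$ and the hypothesis of Theorem~\ref{thm:skoda-division'} is satisfied. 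The theorem yields $h_j=H_j\,dz_1\wedge\cdots\wedge dz_n\in\calO(D)$ with $\sum_{j=1}^{n}z_j H_j\equiv 1$ on $D$. Each $H_j$ extends via Hartogs to $\widetilde H_j\in\calO(D\cup\widehat H)$, and by the identity principle on the connected open set $D\cup\widehat H$ the identity $\sum_j z_j\widetilde H_j=1$ propagates to all of $D\cup\widehat H$. Evaluating at $z=0\in\widehat H$ gives $0=1$, a contradiction, so $D$ must be pseudoconvex, hence a domain of holomorphy.

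The main obstacle is the Hartogs-extension step: one must verify that the failure of pseudoconvexity together with $\mathring{\overline D}=D$ delivers both a point $q$ in the \emph{interior} of the complement (so that $|z-q|$ is bounded below on $D$ and the constant form $f\equiv 1$ lies in the appropriate weighted $L^2$ space globally on $D$) and a globally defined holomorphic extension of each element of $\calO(D)$ to $D\cup\widehat H$. Once these are in place, Theorem~\ref{thm:skoda-division'} applied to $f\equiv 1$ produces the contradiction in a single stroke, the role of the null thin complement being precisely to force $\widehat H$ to escape $\overline D$.
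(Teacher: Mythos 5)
Your proposal is correct in substance and uses the same engine as the paper: apply Theorem~\ref{thm:skoda-division'} with $g=z-z^0$ for a point $z^0$ outside $\overline D$ (the null thin complement being exactly what guarantees such a point exists) and $f=dz$, then analytically continue the quotients $h_j/dz$ past the boundary and evaluate $\sum_j(z_j-z_j^0)H_j\equiv 1$ at $z^0$ to get $0=1$. Your integrability check with $\varphi=0$ is fine (the paper inserts an extra weight $(n+\varepsilon)\log(1+|z|^2)$, which is not actually needed since $|z-z^0|^{-2(n+\varepsilon)}$ already decays like $\rho^{-1-2\varepsilon}$ against $\rho^{2n-1}d\rho$ at infinity). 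Where you diverge is in how the continuation is produced: the paper negates the \emph{definition} of domain of holomorphy directly, obtaining open sets $U_1\subset U_2\cap D$ with $U_2\not\subset D$ such that every $F\in\calO(D)$ matches some element of $\calO(U_2)$ on $U_1$, and then runs the identity principle on $U_2$; you instead pass through non-pseudoconvexity, Hartogs figures, and (implicitly, in both the reduction ``not a domain of holomorphy $\Rightarrow$ not pseudoconvex'' and the final ``pseudoconvex $\Rightarrow$ domain of holomorphy'') the solution of the Levi problem. Both work; the paper's route is lighter because the simultaneous-extension property is built into the definition it negates.

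One step of yours is imprecisely stated, though easily repaired: the claim that every $F\in\calO(D)$ extends to a single-valued $\widetilde F\in\calO(D\cup\widehat H)$ does not follow from Hartogs extension alone --- the extension of $F|_H$ to $\widehat H$ need not agree with $F$ on components of $D\cap\widehat H$ other than the one containing $H$, and $\mathring{\overline D}=D$ does not cure this. But you do not need it: the identity $\sum_j z_j H_j=1$ holds on the open set $H\subset\widehat H$, the extensions $\widetilde H_j$ live on the connected set $\widehat H$, so the identity principle on $\widehat H$ alone propagates the relation to $0\in\widehat H$ and yields the contradiction. State the identity-principle step on $\widehat H$ rather than on $D\cup\widehat H$ and the argument is complete.
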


 Moreover, we give a new characterization of domains of holomorphy via the $L^2$-optimal condition involving  the distance function.

\begin{cor}\label{cor2}
  Let $D$ be a domain in $\mc^n$. Then $D$ is a domain of holomorphy if and only if $(D,-\log d(z,D^c))$  is $L^2$-optimal.
\end{cor}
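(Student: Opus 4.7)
The plan is to establish the two implications separately.

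For the forward direction, assume $D$ is a domain of holomorphy. Then $D$ is pseudoconvex and, by Oka's theorem, the function $\psi(z) := -\log d(z, D^c)$ is plurisubharmonic on $D$. Invoking the H\"ormander-Demailly $L^2$ existence theorem on pseudoconvex manifolds with singular plurisubharmonic weight yields, for any smooth strictly psh $\phi$ and any K\"ahler metric $\omega$ on $D$, a solution of $\dbar u = f$ satisfying \eqref{eq:a1} with $\varphi = \psi$. This shows $(D, -\log d(\cdot, D^c))$ is $L^2$-optimal.

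For the backward direction, assume $(D, -\log d(\cdot, D^c))$ is $L^2$-optimal; the goal is to show that $D$ is a domain of holomorphy, equivalently (in $\mc^n$) that $D$ is holomorphically convex. I would argue by contradiction: suppose there is a compact $K \subset\subset D$ and a sequence $\{z_k\}$ in the holomorphic hull $\hat K_D$ with $d(z_k, D^c) \to 0$, and (after extraction) $z_k \to z_0 \in \partial D$. The plan is to apply Theorem \ref{thm:skoda-division'} with $\varphi = -\log d(\cdot, D^c)$ and the shifted coordinate tuple $g := (z_1 - z_{0,1}, \ldots, z_n - z_{0,n}) \in \calO(D)^{\oplus n}$, so that $|g(z)|^2 = |z - z_0|^2$, $p = n$, and $m = n-1$. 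The key geometric input is the pointwise bound $|z - z_0| \geq d(z, D^c)$, valid since $z_0 \in D^c$, which controls the Skoda weights by powers of the boundary distance.

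The heart of the proof -- and the main obstacle I anticipate -- is to construct a holomorphic $(n,0)$-form $f$ on $D$ whose integrability
\[
\int_D i^{n^2}\, f \wedge \bar f \cdot d(z, D^c)\, |z - z_0|^{-2(n+\eps)} < +\infty
\]
holds, and then to convert the resulting weighted $L^2$ estimate on the components $h_j$ of the decomposition $f = \sum_{j=1}^n h_j (z_j - z_{0,j})$ into a uniform sup-norm control for each $h_j$ on $K$ via interior mean-value inequalities. Combined with $z_{k,j} - z_{0,j} \to 0$ and the pointwise identity $f(z_k) = \sum_j h_j(z_k)(z_{k,j} - z_{0,j})$, this should force $|f(z_k)|$ to become much smaller than $\sup_K |f|$ along the sequence, contradicting $z_k \in \hat K_D$ once $f$ is chosen so that $\sup_K |f|$ is bounded while $|f(z_k)|$ is not. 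This parallels the Pflug and Diederich-Pflug strategy exploited in the authors' previous work \cite{Liu-Zhang} for Corollary \ref{cor1}, with the distance-function weight now replacing the trivial weight. Ruling out such sequences forces $\hat K_D \subset\subset D$, so $D$ is holomorphically convex and hence a domain of holomorphy.
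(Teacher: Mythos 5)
Your forward direction matches the paper's. For the converse, you have correctly identified the main tool (Theorem \ref{thm:skoda-division'} with $g=z-z^0$ for a boundary point $z^0$) and the key geometric inequality $d(z,D^c)\le|z-z^0|$, but the step you yourself flag as ``the main obstacle'' --- producing an $f$ with the required integrability --- is precisely the step you leave open, and it fails as you have set it up. Taking $f=dz$ (so that $i^{n^2}f\wedge\bar f$ is Lebesgue measure), the integrand $d(z,D^c)\,|z-z^0|^{-2(n+\eps)}$ is integrable near $z^0$ only for $\eps<\tfrac12$ (it is $O(|z-z^0|^{1-2n-2\eps})$ there), but for an unbounded $D$ with $d(z,D^c)\sim|z|$ at infinity the same bound only gives decay $|z|^{1-2n-2\eps}$, which requires $\eps>\tfrac12$; the two requirements are incompatible. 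The paper resolves this by first noting that adding the smooth strictly plurisubharmonic function $(n+\eps)\log(1+|z|^2)$ to $\varphi$ preserves $L^2$-optimality, and then running the division theorem for the pair $(D,-\log d(z,D^c)+(n+\eps)\log(1+|z|^2))$: the extra factor $(1+|z|^2)^{-(n+\eps)}$ secures convergence at infinity while $\eps\in(0,\tfrac12)$ handles the singularity at $z^0$. Your proposal omits this twist entirely, so the hypothesis of Theorem \ref{thm:skoda-division'} is never verified.

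The endgame you sketch (holomorphic hulls, sup-norm control of the $h_j$ on $K$, a sequence $z_k\in\hat K_D$ tending to $z^0$) could be made to work once $f=dz$ is fixed --- from $1=\sum_j (h_j/dz)(z_k)\,(z_{k,j}-z^0_j)$ and $|(h_j/dz)(z_k)|\le\sup_K|h_j/dz|$ one gets a contradiction as $z_k\to z^0$ --- but as written it is garbled: you ask that ``$\sup_K|f|$ is bounded while $|f(z_k)|$ is not,'' whereas what you need is $|f(z_k)|$ bounded \emph{below}. It is also more roundabout than necessary. The paper's conclusion is immediate: the identity $\sum_j(z_j-z^0_j)h_j=dz$ forces at least one $h_j/dz$ to fail to continue holomorphically across $z^0$ (otherwise evaluation at $z^0$ yields $0=1$), and since $z^0$ ranges over all of $\partial D$, the Cartan--Thullen theorem gives that $D$ is a domain of holomorphy. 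In short: right tool and right inequality, but the decisive integrability verification is missing and cannot be completed without the additional plurisubharmonic twist.
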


In addition, it is well-known that any Stein manifold admits a smooth strictly plurisubharmonic exhaustion function.
Similarly, we can obtain a Skoda-type $L^2$ division for $L^2$-optimal pairs on Stein manifolds.

\begin{thm}\label{thm:skoda-division-line-bundle}
Let $D$ be a domain in a Stein manifold $(X,\omega)$ and $L$ be a holomorphic line bundle endowed with a possibly singular metric $e^{-\varphi}$ over $D$ and $g=(g_{1},\cdots, g_{p}) \in \calO(D)^{\oplus p}$. Assume that the local weight function $\varphi$ is upper semi-continuous and  the pair $(D,L,\varphi)$ is $L^2$-optimal.
Set $\varepsilon>0$ and $ m=\min \{n,p-1 \}$.
Then for any $L$-valued $(n,0)$-form $f$ satisfying
$$
\int_D |f|^2_{\omega,\varphi} e^{-(1+m+\varepsilon)\log|g|^2} dV_{\omega}<+\infty,
$$
there exist holomorphic $L$-valued $(n,0)$-forms $(h_1,\cdots,h_p)$ on $D$
such that
$$
\sum_{j=1}^{p} h_j g_j=f
$$
and
\begin{equation}\label{for a}
  \sum_{j=1}^{p}\int_D |h_j|^2_{\omega,\varphi} e^{-(m+\varepsilon)\log|g|^{2}} dV_{\omega}
  \le(1+\frac{m}{\varepsilon})\int_D |f|^2_{\omega,\varphi}e^{-(1+m+\varepsilon)\log|g|^2}dV_{\omega}.
\end{equation}
\end{thm}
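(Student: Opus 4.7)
The plan is to mirror the argument used for Theorem~\ref{thm:skoda-division'}, with the trivial line bundle on $D\subset\mc^n$ replaced by $(L,e^{-\varphi})$ on the domain $D\subset X$ in the Stein manifold. The Stein hypothesis plays a double role: it supplies smooth strictly plurisubharmonic exhaustion functions (both for regularizing the singular weight $\varphi$ and for producing the auxiliary plurisubharmonic functions appearing in the $L^2$-optimal condition), and it lets us carry out the curvature/Bochner computations intrinsically on $L$ without needing a global trivialization.

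Set $Z:=\{g=0\}$. On $D\setminus Z$ I would decompose a candidate solution as $h=h_0+\sigma$, where $h_0:=\bar{g}f/|g|^2$ is the pointwise projection of $f$ onto the line subbundle spanned by $\bar{g}$ inside $\calO^{\oplus p}\otimes L$ (so $g\cdot h_0=f$, though $h_0$ is not holomorphic), and $\sigma$ is an $L$-valued $(n,0)$-form taking values in the rank-$(p-1)$ kernel subbundle $S:=\ker(g\cdot)$. Since $f$ is holomorphic, $g\cdot\dbar h_0=\dbar f=0$, so $\dbar h_0$ takes values in $S\otimes L$, and the problem reduces to solving $\dbar\sigma=\dbar h_0$ with $L^2$-control against the weight $\varphi+(m+\varepsilon)\log|g|^2$. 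Skoda's basic algebraic inequality bounds the pointwise norm of $\dbar h_0$ in terms of $f$ and $\dbar g$, while the second-fundamental-form formula computes $\Theta_S$ in terms of $\partial g$; the twist $(m+\varepsilon)\log|g|^2$ is calibrated precisely so that, after adding $(m+\varepsilon)i\ddbar\log|g|^2\otimes\Id_S$ to the curvature of $S\otimes L$, the resulting Bochner operator yields the inverse bound $(1+m/\varepsilon)$ when paired against $f$.

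The main obstacle is to accommodate the singular twist $(m+\varepsilon)\log|g|^2$ within the $L^2$-optimal framework, whose definition only allows smooth strictly plurisubharmonic auxiliary weights $\phi$. I would regularize by replacing $\log|g|^2$ with $\log(|g|^2+\delta)$ and feeding $\phi_\delta:=\psi+(m+\varepsilon)\log(|g|^2+\delta)$ into the $L^2$-optimal condition, where $\psi$ is a small multiple of a smooth strictly psh exhaustion of $D$ drawn from the Stein structure of $X$ and chosen so that $\phi_\delta$ is smooth and strictly psh. Applying the condition on a Stein exhaustion of $D$ by relatively compact subdomains yields uniform $L^2$ bounds, and passing to the limits $\delta\to 0$ and $\psi\to 0$ by weak compactness in $L^2(D,L,e^{-\varphi})$ produces measurable solutions $h_j$ on $D\setminus Z$ satisfying the estimate~\eqref{for a}. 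Their holomorphic extension across $Z$ follows from the $L^2$ bound and the positive codimension of $Z$ via the Riemann extension theorem, and the coherence of $\calI(\varphi)$ (Lemma~\ref{lem coherence}) justifies identifying these weak limits with genuine holomorphic sections of $L$. The delicate point throughout is that the twofold regularization must preserve the right-hand side of~\eqref{for a}; this is ensured by monotone convergence for the $\delta$-limit and by the decreasing nature of the $\psi$-approximants.
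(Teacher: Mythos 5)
Your proposal diverges from the paper at the crucial analytic step, and the divergence opens a genuine gap. You propose the classical decomposition $h=h_0+\sigma$ with $h_0=\bar g f/|g|^2$ and $\sigma$ valued in the kernel subbundle $S=\ker(g\cdot)\subset\calO^{\oplus p}$, reducing the problem to solving $\dbar\sigma=\dbar h_0$ for an $S\otimes L$-valued $(n,1)$-form with a curvature estimate coming from the second fundamental form of $S$ and the twist $(m+\varepsilon)i\pa\dbar\log|g|^2$. But the only analytic input available is the $L^2$-optimality of $(D,L,\varphi)$, which is a solvability statement for $L$-valued $(n,1)$-forms against smooth strictly plurisubharmonic auxiliary weights; it says nothing about $\dbar$-equations with values in the rank-$(p-1)$ bundle $S\otimes L$, and there is no positivity hypothesis on $i\pa\dbar\varphi$ that would let you run the Bochner--Kodaira inequality on $S\otimes L$ directly ($\varphi$ is merely upper semi-continuous). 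Solving componentwise in $\calO^{\oplus p}\otimes L$ and then projecting onto $S$ does not work either, since the projection does not commute with $\dbar$ --- that failure is exactly the second fundamental form you invoke. So the central step of your plan cannot be executed from the stated hypotheses.

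The paper avoids this by following Skoda's original functional-analytic route: it introduces $T_1(h)=\sum_j g_j h_j$ and the componentwise operator $T_2=\dbar^{\oplus p}$ on $p$-tuples of $L$-valued $(n,0)$-forms over $D'=D\setminus(\supp\calO_D/\calI(\varphi)\cup\{g=0\})$, and reduces everything via Lemma \ref{lem:functional-lemma-Skoda} to the a priori inequality $c\|u\|\leq\|T_1^*u+T_2^*v\|$. The term $\|T_2^*v\|^2$ is bounded below component by component using a Bochner-type inequality (Lemma \ref{lem:weak-basic-inequality'}) that is \emph{derived from} the $L^2$-optimal condition for $L$-valued forms, the key device being to take the K\"ahler metric to be $\omega_{g,\gamma}=i\pa\dbar\psi_{g,\gamma}$ itself, so that $B_{a\psi_{g,\gamma},\omega_{g,\gamma}}=a\,\Id$ on $(n,1)$-forms; the cross term $2\mathrm{Re}\iinner{T_2(T_1^*u),v}$ is controlled pointwise by the reformulated Skoda inequality (Lemma \ref{for-skoda'}). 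For the Stein version the paper simply replaces $\gamma|z|^2$ by $\gamma\eta$ for a smooth strictly plurisubharmonic exhaustion $\eta$ of $X$ and computes in normal coordinates. Your regularizations (replacing $\log|g|^2$ by $\log(|g|^2+\delta)$, weak limits, extension across $\{g=0\}$ by $L^2$-negligibility) are in the right spirit and parallel the paper's, but they do not repair the core issue above; also, the coherence of $\calI(\varphi)$ is used to make $\supp\calO_D/\calI(\varphi)$ an analytic set on which Lemma \ref{lem L2negligible} applies, not to identify weak limits with holomorphic sections. If you rebuild the middle of your argument around the operators $T_1,T_2$ and the a priori estimate, the rest of your outline goes through.
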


\begin{rem}
It is worth noting  that the integrals on both sides of \eqref{for a} are independent of the choice of the K\"ahler metric $\omega$ since $f$ and $h$ are  $(n,0)$-forms.
\end{rem}

For any holomorphic vector bundle over a Stein manifold,  we know that there exists a hypersurface such that the bundle is trivial outside the hypersurface. Therefore, similar to \cite{Siu},
we can translate Skoda's $L^2$ division into the following algebraic geometric formulation involving multiplier ideal sheaves.

\begin{cor}\label{thm:algebraic}
 Let $X$ be a Stein manifold and $L$  a holomorphic line bundle endowed with a possibly singular metric $e^{-\varphi}$ over $X$. Assume that the local weight function $\varphi$ is upper semi-continuous  and  the pair $(X,L,\varphi)$ is $L^2$-optimal. Let $M$ be an effective holomorphic line bundle on $X$ and $g=(g_{1},\cdots, g_{p}) \in (H^0(X,M))^{\oplus p}$.
Let $k  \ge 1$ be an integer and
define
	$$
	\calI_k =\calI (\varphi+(n+k)\log |g|^{2}),
	$$
then we have
\begin{equation*}
	H^0(X,K_X\otimes L\otimes (n+k+1)M\otimes\calI_{k+1})=\sum^{p}_{j=1} g^{j} H^0 (X,K_X\otimes L\otimes (n+k)M\otimes\calI_k).
\end{equation*}
\end{cor}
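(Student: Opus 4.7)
The plan is to reduce the global statement to a stalk-level surjectivity on the Stein manifold $X$ and then invoke Cartan's Theorem B. The inclusion $\supseteq$ is immediate: for any $h_j\in\calI_k$ the pointwise bound $|g_j|^2\leq|g|^2$ makes $g_jh_j$ a local section of $\calI_{k+1}$, so $\sum g_jh_j$ is a global section of $K_X\otimes L\otimes(n+k+1)M\otimes\calI_{k+1}$. For the reverse inclusion I consider the morphism of coherent sheaves
\[
\Phi\colon\bigoplus_{j=1}^{p}K_X\otimes L\otimes(n+k)M\otimes\calI_k\;\longrightarrow\;K_X\otimes L\otimes(n+k+1)M\otimes\calI_{k+1},\qquad (h_j)\mapsto\sum_{j=1}^{p}g_jh_j,
\]
where coherence of $\calI_k$ and $\calI_{k+1}$ is supplied by the analogue of Lemma \ref{lem coherence} applied to the twisted weights $\varphi+(n+k)\log|g|^2$ and $\varphi+(n+k+1)\log|g|^2$. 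Once $\Phi$ is shown to be surjective on every stalk, its kernel is coherent, Cartan's Theorem B yields $H^1(X,\ker\Phi)=0$, and surjectivity on global sections follows.

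To prove stalk surjectivity at $x_0\in X$, I choose a small Stein coordinate neighborhood $U\ni x_0$ on which both $L$ and $M$ admit holomorphic trivializations. Under these trivializations a section of $K_X\otimes L\otimes sM$ becomes an $L$-valued $(n,0)$-form $\tilde f$ on $U$, the sections $g_j$ become holomorphic functions $\tilde g_j$, and the condition defining $\calI_{k+1}$ becomes
\[
\int_U|\tilde f|^2_{\omega,\varphi}\,e^{-(n+k+1)\log|\tilde g|^2}\,dV_\omega<+\infty
\]
after shrinking $U$ so that local integrability of a chosen representative $\tilde f$ of a germ $f_{x_0}\in\calI_{k+1,x_0}$ turns into a global estimate on $U$. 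With the choice $\varepsilon:=n+k-m$ (which is positive since $m=\min\{n,p-1\}\leq n$ and $k\geq 1$) we have $m+\varepsilon=n+k$ and $1+m+\varepsilon=n+k+1$, so Theorem \ref{thm:skoda-division-line-bundle} applied on $U$ produces holomorphic $L$-valued $(n,0)$-forms $\tilde h_j$ satisfying $\tilde f=\sum\tilde g_j\tilde h_j$ together with the $L^2$ bound for weight $-(n+k)\log|\tilde g|^2$. Undoing the trivializations converts these into germs $h_{j,x_0}\in\calI_{k,x_0}$ that realize the required division at $x_0$.

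The substantive obstacle is verifying that the local pair $(U,L|_U,\varphi|_U)$ inherits the $L^2$-optimal property, since this is what legitimizes invoking Theorem \ref{thm:skoda-division-line-bundle} on $U$. The natural route is via the converse $L^2$ theory cited in the introduction: the $L^2$-optimal hypothesis on $(X,L,\varphi)$ forces $\varphi$ to be plurisubharmonic, and small Stein charts equipped with psh weights are $L^2$-optimal by H\"ormander's classical theorem (the Stein analogue of Corollary \ref{cor2}). With plurisubharmonicity in hand, coherence of $\calI_k$ reduces to Nadel's theorem, the hypotheses of Theorem \ref{thm:skoda-division-line-bundle} are satisfied on $U$, and everything else in the argument is formal: the local divisions produced by Skoda are assembled into the claimed global equality by Cartan B.
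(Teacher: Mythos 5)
There is a genuine gap at exactly the point you flag as ``the substantive obstacle.'' Your resolution --- that the $L^2$-optimality of $(X,L,\varphi)$ forces $\varphi$ to be plurisubharmonic via the converse $L^2$ theory, after which small charts with psh weights are handled by H\"ormander --- is not available. The characterizations in \cite{DNW21,DNWZ22,DWZZ} require the weight to be $C^2$-smooth, whereas here $\varphi$ is merely upper semi-continuous (a possibly singular metric), and no result in this paper or in the cited literature upgrades $L^2$-optimality of such a pair to plurisubharmonicity of $\varphi$. Indeed, if that implication held, Theorem \ref{thm:skoda-division'} and this corollary would collapse to the classical Skoda theorem with a psh weight, and the entire technical apparatus of Sections \ref{sec:notations}--\ref{sec:prove-main} would be unnecessary; the whole point of the paper is to run Skoda's argument using only the $L^2$-optimal condition. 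Without plurisubharmonicity you are left needing the unproved claim that the restriction $(U,L|_U,\varphi|_U)$ to a small chart is again $L^2$-optimal, and the $L^2$-optimal condition is not known to localize --- the definition quantifies over $\bar\partial$-closed forms on the \emph{whole} domain, and a form on $U$ need not extend. So the stalk-surjectivity step of your Cartan~B scheme cannot be carried out as written.

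The paper's proof is structured precisely to avoid localization: it stays global. One picks a smooth strictly plurisubharmonic exhaustion $\rho$ of $X$ growing fast enough that $\int_X |f|^2e^{-\varphi-\rho}/|g|^{2(n+k+1)}<+\infty$, notes that $(X,L,\varphi+\rho)$ is still $L^2$-optimal (adding a smooth psh function to the weight is absorbed into the test weight $\phi$ of the definition), trivializes $L$ and $M$ off a hypersurface $A\subset X$, and applies the global division argument of Theorem \ref{thm:skoda-division-line-bundle} (with $\gamma|z|^2$ replaced by $\gamma\rho$, and with $g$ padded by zeros so that $m=n$ and $\varepsilon=k$) on $X\setminus A$; the resulting forms extend across $A$ by Lemma \ref{lem L2negligible}. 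If you want to salvage your write-up, replace the local-to-global reduction by this global argument; your treatment of the easy inclusion and the bookkeeping $m+\varepsilon=n+k$, $1+m+\varepsilon=n+k+1$ are fine.
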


The rest of this paper is organized as follows.
In \S \ref{sec:notations},
we clarify some basic notations and prepare the key ingredients of the proof of Theorem \ref{thm:skoda-division'};
In \S \ref{sec:prove-main},
we prove Theorem \ref{thm:skoda-division'} and provide some generalizations;
In \S \ref{sec:application},
we present some applications of Theorem \ref{thm:skoda-division'}, including some new  characterizations of domain
of holomorphy via $L^2$-optimal conditions and
an algebraic geometric formulation of Theorem \ref{thm:skoda-division-line-bundle}.

\textbf{Acknowledgement:}
The authors are very grateful to Professor
Xiangyu Zhou for valuable suggestions on related topics.
Zhuo Liu was supported by the National Natural Science Foundation of China (No.12501101) and the Scientific Research Foundation of Chongqing University of Technology (No.2025ZDZ013).
Xujun Zhang was supported by the National Natural Science Foundation of China (No.12501102).
Xujun Zhang was also supported by the Tianyuan Mathematical Center in Southwest China, Guangxi Base.

\section{Basic estimate for Skoda's division}\label{sec:notations}

Let $D$ be a domain in $\mc^n$, $\omega$ be a K\"ahler metric on $D$ and $\varphi$ be an upper semi-continuous function on $D$ such that $e^{-\varphi}\in L^1(D;\loc)$.
Let $\wedge_c^{p,q}(D)$ be the space of all compactly supported smooth $(p,q)$-forms on $D$,
and $L_{p,q}^2(D;\loc)$ be the space of all  $(p,q)$-forms on $D$ whose coefficients are $L_\loc^2$ functions.

Let $|\cdot|^2_{\omega,\varphi}:=|\cdot|^2_{\omega}e^{-\varphi}$ and $\inner{\cdot,\cdot}_{\omega,\varphi}:=\inner{\cdot,\cdot}_{\omega}e^{-\varphi}$ denote the norm and the inner product on $\wedge^{p,q}T^*_D$ that induced by $\omega$ and $e^{-\varphi}$.
$dV_\omega:=\frac{\omega^n}{n!}$ denotes the volume form induced by $\omega$. Define
$$
	L_{p,q}^2(D;\omega,\varphi) := \left\{f\in L_{p,q}^2(D;\loc): \int_D |f|_{\omega,\varphi}^2dV_\omega <+\infty \right\},
$$
and
$$
\iinner{\cdot, \cdot }_{\omega, \varphi}= \int_{D}\inner{\cdot,\cdot}_{\omega,\varphi} dV_\omega.
$$
Then $L_{p,q}^2(D;\omega,\varphi) $ is a Hilbert space. Since $\varphi$ is  upper semi-continuous, we have
$L_{p,q}^2(D;\omega,\varphi)\subset L_{p,q}^2(D;\loc)$. Then we compute $\dbar$ in the sense of  distribution theory and we say that $f\in \Dom (\dbar)$ if $\dbar f\in L_{p,q+1}^2(D;\omega,\varphi)$. Since
$e^{-\varphi}\in L^1(D;\loc)$, we have $\wedge_c^{p,q}(D)\subset\Dom (\dbar) $. Therefore, the linear operator $\dbar: L_{p,q}^2(D;\omega,\varphi)  \rightarrow L_{p,q+1}^2(D;\omega,\varphi) $ is   closed  and densely defined.

Now we assume that $(D,\varphi)$ is $L^2$-optimal. Firstly, we observe that the multiplier ideal sheaf $\calI(\varphi)$ associated to $\varphi$ is coherent. Although the proof is attributed to \cite{Nadel, Dem-93JDG}, we include a detailed proof here for the sake of completeness.

\begin{lem}[{\cite{Nadel, Dem-93JDG}}]\label{lem coherence}
  Let $D$ be a domain in $\mc^n$ and $\varphi$ be an upper semi-continuous function on $D$. Define
   the multiplier ideal sheaf $\calI(\varphi)\subset\calO_D$ associated to $\varphi$ by $$\calI(\varphi)_x:=\{f\in\calO_{D,x}~:~|f|^2e^{-\varphi} ~\text{is}~ L^1~ \text{ near}~ x\}.$$
  Assume that $(D,\varphi)$ is $L^2$-optimal, then $\calI(\varphi)$   is coherent.
\end{lem}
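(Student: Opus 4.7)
The plan is to adapt Nadel's classical coherence argument (\cite{Nadel}), substituting the $L^2$-optimal hypothesis on $(D,\varphi)$ for H\"ormander's estimate. Since coherence is local, fix $x_0\in D$ and a relatively compact ball $B$ with $x_0\in B\Subset D$. Introduce
\[
\calH:=\Bigl\{F\in\calO(D):\int_B|F|^2e^{-\varphi}\,dV<+\infty\Bigr\}
\]
and let $\calJ\subset\calO_B$ be the subsheaf whose stalk at $y\in B$ is the $\calO_{D,y}$-ideal generated by $\{F_y:F\in\calH\}$. Each such germ is locally in $\calI(\varphi)$, so $\calJ\subset\calI(\varphi)|_B$; the coherent subsheaves of $\calO_B$ generated by finite subsets of $\calH$ form a directed family with stalkwise union $\calJ$, and the strong Noetherian property of coherent analytic sheaves forces this family to be locally stationary, making $\calJ$ coherent on $B$.

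For the reverse inclusion, fix $y\in B$ and $f\in\calI(\varphi)_y$. Coherence of $\calJ$ makes $\calJ_y$ finitely generated in the Noetherian local ring $\calO_{D,y}$, so Krull's intersection theorem gives $\bigcap_{s\ge 1}(\calJ_y+\frakm_y^s)=\calJ_y$; it therefore suffices to construct, for each $s\ge 1$, an element $F\in\calH$ with $F-f\in\frakm_y^s$.

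Choose a smooth cutoff $\chi$ equal to $1$ near $y$ with compact support in $B$, set $v:=\chi f\,dz_1\wedge\cdots\wedge dz_n$, and observe that $\dbar v$ is a smooth, $\dbar$-closed, compactly supported $(n,1)$-form on $D$ vanishing in a neighborhood of $y$. For small $\delta>0$ the weight
\[
\phi_\delta:=|z|^2+(n+s)\log(|z-y|^2+\delta)
\]
is smooth and strictly plurisubharmonic on $D$. Applying \eqref{eq:a1} to $\dbar u=\dbar v$ on $(D,\varphi)$ with the Euclidean K\"ahler metric and weight $\phi_\delta$ yields a solution $u_\delta=U_\delta\,dz_1\wedge\cdots\wedge dz_n$; the right-hand side of \eqref{eq:a1} remains finite uniformly in $\delta$ because $\dbar v$ is compactly supported away from $y$, and on that support $B_{\phi_\delta}^{-1}$ is controlled by $i\ddbar|z|^2$. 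A standard weak-compactness argument as $\delta\to 0$ produces $u=U\,dz_1\wedge\cdots\wedge dz_n$ solving $\dbar u=\dbar v$ with
\[
\int_D|U|^2e^{-\varphi-|z|^2}|z-y|^{-2(n+s)}\,dV<+\infty,
\]
which forces $U$ to vanish at $y$ to order at least $s$. Restricting to $B$ and using that $e^{-|z|^2}$ is bounded below on $B$ gives $\int_B|U|^2 e^{-\varphi}|z-y|^{-2(n+s)}\,dV<+\infty$, and since $|z-y|^{-2(n+s)}\ge(\mathrm{diam}\,B)^{-2(n+s)}$ on $B$ we may drop this factor to obtain $\int_B|U|^2 e^{-\varphi}\,dV<+\infty$. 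Then $F:=\chi f-U$ is a global holomorphic function on $D$ with $F\in\calH$ and $F-f=-(1-\chi)f-U\in\frakm_y^s$.

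The main obstacle is that $\log(|z-y|^2+\delta)$ contributes only semipositively of rank $n-1$ to the Bochner commutator $B_{\phi_\delta}=[i\ddbar\phi_\delta,\Lambda_\omega]$, so the strict positivity of its inverse in \eqref{eq:a1} must be carried by the $|z|^2$ term throughout the limit $\delta\to 0$. Working with the $L^2(B,\varphi)$-restriction on a bounded $B\Subset D$ rather than demanding a global $L^2(D,\varphi)$-bound is equally essential: only the boundedness of $B$ permits the auxiliary weights $e^{-|z|^2}$ and $|z-y|^{-2(n+s)}$ to be stripped at the end of the argument, so that the $L^2$-optimal condition on the global pair $(D,\varphi)$ suffices for local coherence even when $D$ itself is unbounded.
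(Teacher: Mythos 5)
Your proof is correct and follows essentially the same route as the paper's: Nadel's argument via the strong Noetherian property and Krull's intersection theorem, with the $\dbar$-equation $\dbar u=\dbar(\chi f\,dz)$ solved through the $L^2$-optimal condition with weight $(n+s)\log(|z-y|^2+\delta)+|z|^2$ and a weak limit as $\delta\to 0$. The one refinement worth noting is that you define $\calH$ by integrability over a relatively compact ball $B$ rather than over all of $D$, which is exactly what lets you strip the auxiliary factors $e^{-|z|^2}$ and $|z-y|^{-2(n+s)}$ at the end; the paper works with the global space $\calH(D,\varphi)$ and its assertion that $F=\rho f-u/dz$ lies in it is less immediate when $D$ is unbounded, so your localization is the more careful bookkeeping of the same argument.
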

\begin{proof} We use the argument of  \cite[Lemma 4.4]{Dem-93JDG}.

  Let $\calH( D,\varphi)$ be the Hilbert space of holomorphic functions $f\in \calO_D(D)$ satisfying $\int_D|f|^2e^{-\varphi}<+\infty$. By the strong Noetherian property of coherent sheaves, the family of sheaves generated by finite subsets of $\calH( D,\varphi)$ has a maximal element on each compact subset of $D$, hence $\calH( D,\varphi)$ generates a coherent ideal sheaf $\calJ\subset\calO_D $. It is clear that $\calJ\subset\calI(\varphi)$. 
Thanks to the Krull lemma, in order to prove the equality, we need only check that
$$ \calJ_x + \calI(\varphi)_x \cap  \frakm_{D,x}^{s+1}    = \calI(\varphi)_x $$
for every $x\in D$ and every integer $s$.

Let $f\in  \calI(\varphi)_x$ be defined in a neighborhood $V_x$ of
$x$ and let $\rho$ be a cut-off function with support in $V_x$ such that $\rho =1$ in a neighborhood $V'_x$ of
$x$. Denote $z=(z_1,\cdots,z_n)$ be the coordinate of $\mc^n$, $\omega=i\sum_{j=1}^{n}dz_j\wedge d\bar z_j$ and $dz=dz_1\wedge\cdots\wedge dz_n$.   Since $(D,\varphi)$ is $L^2$-optimal, we can solve the equation $\dbar u_\varepsilon=\dbar(\rho f)\wedge dz$  with respect to the smooth strictly plurisubharmonic weight
$$\phi_\varepsilon(z)=(n+s)\log(|z-x|^2+\varepsilon^2)+|z|^2.$$
Then we have \begin{align*}
               \int_D |u_\varepsilon|^2_{\omega,\varphi}e^{-\phi_\varepsilon} dV_\omega & \le\int_{D}\frac{|\dbar(\rho f)\wedge dz|_{\omega,\varphi}^2e^{ -|z|^2}}{(|z-x|^2+\varepsilon^2)^{n+s}}dV_\omega \\
                & \le C\int_{V_x\setminus \overline{V'_x}}|f|^2e^{-\varphi}|z-x|^{-2(n+s)}dV_\omega  <+\infty.
             \end{align*}
Since $\varphi$ is upper semi-continuous and $\phi_\varepsilon$ decreasingly converges to $\phi(z):=(n+s)\log|z-x|^2 +|z|^2$ as $\varepsilon\to0$,
  then by the Banach-Alaoglu-Bourbaki theorem, we can take a sequence $\varepsilon_j\to0$ such that $u_{\varepsilon_j}$ is weakly $L^2$ convergent to a limit $u$ on $D$. Then it follows from the weakly closedness of $\dbar$ that $\dbar u=\dbar(\rho f)\wedge dz$ on $D$. And by Fatou's lemma and the dominated convergence theorem, we have
 \begin{align*}
               \int_D |u |^2_{\omega,\varphi}e^{-\phi } dV_\omega   \le C\int_{V_x\setminus \overline{V'_x}}|f|^2e^{-\varphi}|z-x|^{-2(n+s)}dV_\omega  <+\infty.
             \end{align*}
 Thus $F=\rho f-u/dz$ is holomorphic on $D$, $F\in \calH( D,\varphi)$ and $f_x-F_x= u_x \in \calI(\varphi)_x \cap  \frakm_{D,x}^{s+1}$. This proves the coherence.
\end{proof}

\begin{rem}
  Let $D$ be a domain in a Stein manifold $X$ and $L$ be a holomorphic line
bundle endowed with a metric $e^{-\varphi}$ over $D$, where $\varphi$ is upper semi-continuous. If $(D,L,\varphi)$ is $L^2$-optimal in the sense of Definition \ref{def stein l2 optimal}, then one can show that $\calI(\varphi)$ is also coherent by modifying the proof and replacing  $\phi_\varepsilon(z)$ with $$\phi_\varepsilon(z)=(n+s)\rho\log(|z-x|^2+\varepsilon^2)+|\eta|^2,$$   where $\eta$ is a smooth, sufficiently strictly plurisubharmonic function on $X$.
\end{rem}

Let $g=(g_1, g_2,\cdots, g_p) \in \calO(D)^{\oplus p}$.
For any given holomorphic function $f$ on $D$,
the division problem is to find holomorphic functions $(h_1,\cdots,h_p)$ on $D$ such that
$$
T_1 h=\sum_{j=1}^{p} h_j g_j=f.
$$

Since $\calI(\varphi)$ is coherent, the support of $\calO_D/\calI(\varphi)$ is a complex analytic subset of $D$.
Set $D':=D\setminus(\supp\calO_D/\calI(\varphi)\cup\{g=0\})$, $\psi_{g,\gamma}:=\gamma|z|^2+\log|g|^2$ for some constant $\gamma>0$,
then $e^{-\varphi}$ is locally integrable on $D'$ and $\omega_{g,\gamma}:=i\partial\dbar \psi_{g,\gamma}$ defines a K\"ahler metric on $D'$.

For any $a>0$,
set $H_1=[L_{n,0}^2(D';\omega_{g,\gamma},\varphi+a\psi_{g,\gamma})]^{\oplus p}$
with inner product
$$
 \iinner{h, h' }_{\omega_{g,\gamma},\varphi+a\psi_{g,\gamma}} := \sum^{p} _{j=1} \iinner{ h_j, h'_j }_{\omega_{g,\gamma},\varphi+a\psi_{g,\gamma}}$$
and the corresponding norm
$$
	\|h \|^2_{\omega_{g,\gamma},\varphi+a\psi_{g,\gamma}}:=\iinner{ h, h } _{\omega_{g,\gamma},\varphi+a\psi_{g,\gamma}}
$$
for any $h=(h_1,\cdots,h_p), h'=(h'_1,\cdots,h'_p) \in H_1.$

Set $H_2=L^2_{n,0}(D';\omega_{g,\gamma},\varphi+a\psi_{g,\gamma}+\log|g|^2)$.
We define an operator $T_{1}$ as follows:
$$
\begin{aligned}
	T_{1}:H_1 &\ra H_2,
	\\
	h=(h_1,h_2,\cdots,h_p) & \ra \sum_{j=1}^{p} g_j h_j.
\end{aligned}
$$
Simple calculations show that $T_1$ is a continuous linear operator.

Set
$H_3=[L^2_{n,1} (D';\omega_{g,\gamma}, \varphi+a\psi_{g,\gamma})]^{\oplus p}$, and
we define a linear operator $T_2$ as follows:
$$
\begin{aligned}
	T_2:H_1
	   &\ra H_3,\\
	   h=(h_1,h_2,\cdots,h_p) &\ra
	    (\dbar h_1,\dbar h_2,\cdots,\dbar h_p).
\end{aligned}
$$
Then $T_2$ is closed and densely defined. 
Additionally, $T_1$ sends $\Ker (T_2)$ to the closed subspace
 $L^2_{n,0} (D';\omega_{g,\gamma}, \varphi+a\psi_{g,\gamma})\cap \Ker(\dbar) \subset H_2$.

Let us recall the following functional lemma which is owing to Skoda.

\begin{lem}[\cite{Skoda}]\label{lem:functional-lemma-Skoda}
	Let $H_1,H_2,H_3$ be Hilbert spaces,
    $T_1:H_1 \ra H_2$ be a continuous linear operator and
    $T_2:H_1 \ra H_3$ be closed and densely defined
	operator with $T_{1}(\Ker (T_2)) \subset F$,
where $F$ is a closed subspace of $H_2$.
	Then $T_1(\Ker(T_2))=F$ if and only if
	there exists a constant $c >0$
        such that
\begin{align*}
c\| u \|_{H_2} \leq\| T^*_{1} u+T^*_{2} v \|_{H_1}
\end{align*}
		holds for all $ u \in  F$
		and $ v \in \Dom(T^*_2)\cap (\Ker(T_2^*))^\bot$.
In this case, given $u\in F$, there exists $h\in\Ker(T_2)$ such that $T_1h=u$ and $\|h\|_{H_1}\le c^{-1}\|u\|_{H_2}$.
\end{lem}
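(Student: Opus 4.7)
The plan is to reduce the statement to the standard Hilbert-space duality between surjectivity (with quantitative right inverse) of a bounded operator and a uniform lower bound on its adjoint, applied to the restriction of $T_1$ to $\Ker(T_2)$. The precise form in which $v$ is restricted to $\Dom(T_2^*)\cap(\Ker T_2^*)^\perp$ will fall out of an orthogonal projection computation at the end.

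First I would introduce the restricted map $\tilde T_1:=T_1|_{\Ker T_2}\colon \Ker(T_2)\to F$. Because $T_2$ is closed, $\Ker(T_2)$ is a closed subspace of $H_1$, hence a Hilbert space, and $\tilde T_1$ is a bounded linear operator between Hilbert spaces. The content of the lemma is exactly that $\tilde T_1$ is surjective and admits a right inverse of norm $\le c^{-1}$. By the standard open-mapping / closed-range equivalence for bounded operators between Hilbert spaces, this is equivalent to the lower bound
\[
\|\tilde T_1^* u\|_{H_1}\ge c\,\|u\|_{H_2}\qquad\text{for all } u\in F,
\]
and moreover the minimal-norm solution $h$ of $\tilde T_1 h=u$ then satisfies $\|h\|_{H_1}\le c^{-1}\|u\|_{H_2}$.

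Next I would identify $\tilde T_1^*$. For $u\in F$ and $h\in\Ker(T_2)$, the identities
$\inner{\tilde T_1^*u,h}_{\Ker T_2}=\inner{u,T_1h}_{H_2}=\inner{T_1^*u,h}_{H_1}$
show that $\tilde T_1^*u=P_{\Ker T_2}(T_1^*u)$, the orthogonal projection of $T_1^*u\in H_1$ onto $\Ker(T_2)$. Consequently,
\[
\|\tilde T_1^*u\|_{H_1}=\mathrm{dist}\bigl(T_1^*u,\,(\Ker T_2)^\perp\bigr).
\]
Since $T_2$ is closed and densely defined, the general identity $(\Ker T_2)^\perp=\overline{\Range(T_2^*)}$ applies, and therefore
\[
\|\tilde T_1^*u\|_{H_1}=\inf_{v\in \Dom(T_2^*)}\|T_1^*u+T_2^*v\|_{H_1}.
\]

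Finally I would trim the infimum to $v\in\Dom(T_2^*)\cap(\Ker T_2^*)^\perp$: any $v\in\Dom(T_2^*)$ splits orthogonally as $v=v_1+v_0$ with $v_1\in\Dom(T_2^*)\cap(\Ker T_2^*)^\perp$ and $v_0\in\Ker(T_2^*)$, and since $T_2^*v_0=0$ we have $T_2^*v=T_2^*v_1$, so the infimum is unchanged under this restriction. Combining the three steps yields both directions of the equivalence together with the quantitative bound $\|h\|_{H_1}\le c^{-1}\|u\|_{H_2}$ simultaneously. The only non-formal point is the identity $(\Ker T_2)^\perp=\overline{\Range(T_2^*)}$ for an unbounded closed densely defined operator; this is the step that genuinely uses the hypotheses on $T_2$, whereas the rest of the argument is routine Hilbert-space duality.
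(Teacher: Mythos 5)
The paper does not prove this lemma at all---it is quoted from Skoda's 1972 paper and used as a black box---so there is no in-paper argument to compare against. Your proof is correct and complete. The reduction to the restricted operator $\tilde T_1 = T_1|_{\Ker T_2}$, the identification $\tilde T_1^*u = P_{\Ker T_2}(T_1^*u)$, the identity $(\Ker T_2)^\perp = \overline{\Range(T_2^*)}$ for a closed densely defined operator, and the final trimming of $v$ to $\Dom(T_2^*)\cap(\Ker T_2^*)^\perp$ (valid because $\Ker T_2^*\subset\Dom T_2^*$, so the component $v_1=v-v_0$ stays in the domain and $T_2^*v=T_2^*v_1$) are all sound, and together they yield both directions of the equivalence plus the quantitative bound $\|h\|_{H_1}\le c^{-1}\|u\|_{H_2}$ in one pass. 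Your route through orthogonal projections and the closed-range theorem is slightly more structural than Skoda's original argument, which obtains $h$ by bounding the functional $T_1^*u+T_2^*v\mapsto\inner{u_0,u}$ on the range of $(T_1^*,T_2^*)$ and invoking Hahn--Banach/Riesz; the two are essentially equivalent in content, but yours makes the role of the hypothesis that $T_2$ is closed (needed for $(\Ker T_2)^\perp=\overline{\Range T_2^*}$) more transparent.
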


Therefore, in order to solve the division problem, it suffices to
 verify the inequality
\begin{equation*}
	c \| u\|^2_{H_2}
\le  \|T_1^*u+T_2^*v\|_{H_1}^2
\end{equation*}
for any $u \in \Ker(\dbar)\cap H_2,
v \in  \Dom (T_2^*)\cap(\Ker(T_2^*))^\bot$ and
some constant $c>0$.

Since for any $h\in H_1$,
\begin{align*}
  \iinner{T_1 h,u}_{\omega_{g,\gamma},\varphi+a\psi_{g,\gamma}+\log|g|^2}&=\int_{D'} \sum_{j=1}^{p}\langle g_{j}h_{j}, u\rangle_{\omega_{g,\gamma}} e^{-\varphi-a\psi_{g,\gamma}-\log|g|^2}dV_{\omega_{g,\gamma}}\\
  &=\sum_{j=1}^{p}\int_{D'}\langle h_{j},\bar{g_{j}} u \rangle_{\omega_{g,\gamma}} e^{-\varphi-a\psi_{g,\gamma}-\log|g|^2}dV_{\omega_{g,\gamma}}\\
&=\sum_{j=1}^{p}\iinner{h_j,\bar{g_{j}}e^{-\log|g|^2} u}_{\omega,\varphi+a\psi_{g,\gamma}},
\end{align*}	
we get that for any $u\in H_2$,
\begin{align*}
	T_1^* u =(\bar{g}_1e^{-\log|g|^2} u,\bar{g}_2e^{-\log|g|^2} u ,\cdots,\bar{g}_pe^{-\log|g|^2} u).
\end{align*}
Then we have  \begin{align*}
    \| T_1^* u\|_{\omega_{g,\gamma},\varphi+a\psi_{g,\gamma}}^2=
    \|u\|_{\omega_{g,\gamma},\varphi+a\psi_{g,\gamma}+\log|g|^2}^2.
  \end{align*}
Hence
\begin{align}
    &\| T_1^* u+ T_2^ * v\|_{\omega_{g,\gamma},\varphi+a\psi_{g,\gamma}}^2 \nonumber \\=&\| T_1^* u\|_{\omega_{g,\gamma},\varphi+a\psi_{g,\gamma}}^2+\|  T_2^ * v\|_{\omega_{g,\gamma},\varphi+a\psi_{g,\gamma}}^2+2\text{Re}\langle T_1^* u, T_2^ *v\rangle_{\omega_{g,\gamma},\varphi+a\psi_{g,\gamma}} \nonumber \\ 
    =&\|u\|_{\omega_{g,\gamma},\varphi+a\psi_{g,\gamma}+\log|g|^2}^2+\|  T_2^ * v\|_{\omega_{g,\gamma},\varphi+a\psi_{g,\gamma}}^2+2\text{Re}\langle T_2(T_1^* u), v\rangle_{\omega_{g,\gamma},\varphi+a\psi_{g,\gamma}}.\label{eq:ineq-functional-1}
\end{align}

Now
we estimates the last two terms in \eqref{eq:ineq-functional-1} separately.

\subsection{A Bochner-type inequality from the $L^2$-optimal condition}

In order to estimate the term $\|  T_2^ * v\|_{\omega_{g,\gamma},\varphi+a\psi_{g,\gamma}}^2$, we need to show that we can solve the $\dbar$-equation on $D'$ with respect to $(\omega_{g,\gamma},\varphi+a\psi_{g,\gamma})$.

It is well known that any complex analytic subset is $L^2$-negligible in the sense of the following lemma.

\begin{lem}[{\cite[Chapter VIII-(7.3)]{Demailly}}]\label{lem L2negligible}
  Let $\Omega$ be an open subset of $\mc^n$ and $E$  a complex analytic subset of $\Omega$.
Assume that $v$ is  a $(p, q-1)$-form with $L^2_\loc$
coefficients and $w$ a $(p, q)$-form with $L^1_\loc$ coefficients
such that $\dbar v=w$ on $\Omega\backslash E$ (in the sense of distribution theory). Then $\dbar v=w$ on $\Omega$.
\end{lem}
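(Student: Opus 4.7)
The plan is to establish the statement by the standard cutoff argument: first produce smooth functions $\chi_\varepsilon$ which vanish near $E$, tend to $1$ off $E$, and satisfy $\|\dbar\chi_\varepsilon\|_{L^{2}}\to 0$; then pass to the limit in the distributional pairing. Since the claim is local, I would use a partition of unity to reduce to working on a polydisc $U\subset\Omega$. One may assume $E$ is a proper analytic subset of $\Omega$ (otherwise the statement is vacuous), so locally $E\cap U\subset\{f=0\}$ for some non-trivial $f\in\calO(U)$; such an $f$ is obtained by taking, say, the sum of squared moduli of finitely many local generators of the ideal sheaf of $E$.

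Granted suitable cutoffs $\chi_\varepsilon\in C^{\infty}(U)$ with $0\le\chi_\varepsilon\le 1$, vanishing near $\{f=0\}$, converging pointwise to $1$ on $U\setminus\{f=0\}$, and with $\|\dbar\chi_\varepsilon\|_{L^{2}(U)}\to 0$, I would test against any $\alpha\in\wedge_c^{n-p,n-q}(U)$. Since $\chi_{\varepsilon}\alpha$ is a test form supported in $U\setminus E$, the hypothesis yields, up to a sign, $\int_U w\wedge\chi_\varepsilon\alpha=\pm\int_U v\wedge\dbar(\chi_\varepsilon\alpha)$. Expanding $\dbar(\chi_\varepsilon\alpha)=\dbar\chi_\varepsilon\wedge\alpha+\chi_\varepsilon\,\dbar\alpha$ and letting $\varepsilon\to 0$, dominated convergence handles $\int_U w\wedge\chi_\varepsilon\alpha\to\int_U w\wedge\alpha$ and $\int_U v\wedge\chi_\varepsilon\dbar\alpha\to\int_U v\wedge\dbar\alpha$ (using $w\in L^{1}_{\loc}$ and $v\in L^{2}_{\loc}\subset L^{1}_{\loc}$), while Cauchy--Schwarz gives
\[
\left|\int_U v\wedge\dbar\chi_\varepsilon\wedge\alpha\right|\le C_\alpha\,\|v\|_{L^{2}(\supp\alpha)}\,\|\dbar\chi_\varepsilon\|_{L^{2}(\supp\alpha)}\longrightarrow 0.
\]
Combining these identifies $\dbar v$ with $w$ as distributions on $U$, and hence on $\Omega$ by the partition of unity.

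The main obstacle---and the only place real computation is required---is the construction of the cutoffs. The classical choice is
\[
\chi_\varepsilon(z):=\theta\!\left(\frac{\log\log(1/|f(z)|^{2})}{\log\log(1/\varepsilon)}\right),
\]
where $\theta\in C^{\infty}(\mr)$ is a fixed smooth step function equal to $0$ on $(-\infty,1]$ and to $1$ on $[2,+\infty)$. A direct computation, combined with a coordinate change in which $f$ plays the role of a complex coordinate (the singular locus of $\{f=0\}$ being of lower dimension and hence negligible in $L^{2}$ integration), reduces the estimate of $\|\dbar\chi_\varepsilon\|_{L^{2}(U)}^{2}$ to a one-variable integral of the form $\int r^{-1}(\log r)^{-2}\,dr$, yielding the bound $O\bigl(1/((\log\log(1/\varepsilon))^{2}\log(1/\varepsilon))\bigr)\to 0$. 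The reason a naive one-logarithm cutoff fails---while the double logarithm succeeds---is that an analytic subset has real codimension at least $2$ but still carries positive logarithmic capacity; this double-logarithmic scaling is the sole subtle ingredient of the plan.
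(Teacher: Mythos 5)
The paper does not prove this lemma at all --- it is quoted directly from Demailly's book (Chapter VIII, Lemma 7.3) --- so the only meaningful comparison is with the standard proof of that result, and your argument is exactly that standard proof: localize, dominate $E$ by the zero set of a holomorphic function, and pass to the limit using cutoffs whose $\dbar$ is small in $L^2$. The skeleton (testing against $\chi_\varepsilon\alpha$, splitting $\dbar(\chi_\varepsilon\alpha)$, Cauchy--Schwarz on the term $\int v\wedge\dbar\chi_\varepsilon\wedge\alpha$, dominated convergence on the others) is correct and complete in outline. Three details need repair, though none undermines the approach. First, your explicit cutoff is oriented backwards: with $\theta=0$ on $(-\infty,1]$ and $\theta=1$ on $[2,+\infty)$, the function $\theta\bigl(\log\log(1/|f|^2)/\log\log(1/\varepsilon)\bigr)$ equals $1$ near $\{f=0\}$ (where the argument blows up) and tends to $0$ pointwise off $\{f=0\}$ --- the opposite of the properties you assert and use two lines earlier; replace $\theta$ by $1-\theta$, after which $\chi_\varepsilon\alpha$ really is supported away from $E$. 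Second, ``the sum of squared moduli of local generators'' is not holomorphic and cannot serve as $f$; take instead any single nonzero element $f$ of the ideal of $E$ on the polydisc (which exists because $E$ is a proper analytic subset), and note that $\dbar v=w$ holds a fortiori off the possibly larger hypersurface $\{f=0\}$. Third, the closing heuristic is inverted: analytic sets are pluripolar, hence of \emph{zero} capacity, which is precisely why cutoffs with vanishing Dirichlet energy exist; moreover the single-logarithm cutoff $\theta\bigl(\log(1/|f|^2)/\log(1/\varepsilon)\bigr)$ already yields $\|\dbar\chi_\varepsilon\|_{L^2(K)}^2=O(1/\log(1/\varepsilon))\to0$ by the same coarea computation, so it does not ``fail''; what genuinely fails is the linear-scale cutoff $\theta(|f|/\varepsilon)$, whose Dirichlet energy is only $O(1)$. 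With these corrections your proof is a faithful reconstruction of the argument the paper cites.
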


\begin{lem}[{\cite[Chapter VI-(5.9)]{Demailly}}]
Let $(X, \omega)$ be a hermitian manifold. For simplicity, we denote by $\omega$  the operator defined by $\omega\wedge u$ for every  $u\in\wedge^{p,q}T_X^*$  and let $\Lambda_\omega$ denote its adjoint, then we have
$$[\omega,\Lambda_\omega]u=(p+q-n)u.$$
\end{lem}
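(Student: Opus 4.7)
The identity is purely pointwise and multilinear, so the plan is to reduce it to a computation on the standard Hermitian $\mathbb{C}^n$ and carry out that computation with creation and annihilation operators. At any point $x \in X$, one can choose a unitary coframe $(\zeta_1, \dots, \zeta_n)$ of $T_{X}^{*1,0}$ with respect to $\omega$, so that $\omega_x = i \sum_{j=1}^{n} \zeta_j \wedge \bar\zeta_j$; since $\omega \wedge \cdot$, $\Lambda_\omega$, and hence their commutator, are $\calO_X$-linear in $u$ and act pointwise, nothing is lost in passing to this model.

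In the model, let $e_j$ and $\bar e_j$ denote exterior multiplication by $\zeta_j$ and $\bar\zeta_j$, and let $e_j^*$, $\bar e_j^*$ denote their formal adjoints (interior products) with respect to the induced Hermitian metric on $\wedge^\bullet T^*_{X,x}$. Checking on the monomial basis $\zeta_I \wedge \bar\zeta_J$ gives the canonical anticommutation relations
\begin{equation*}
  \{e_j, e_k^*\} = \delta_{jk}, \qquad \{\bar e_j, \bar e_k^*\} = \delta_{jk},
\end{equation*}
with all other pairs among $e_j, \bar e_j, e_k^*, \bar e_k^*$ anticommuting. Writing $L := \omega \wedge \cdot = i \sum_j e_j \bar e_j$ and taking adjoints yields $\Lambda_\omega = L^* = -i \sum_j \bar e_j^* e_j^*$.

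The heart of the proof is the commutator expansion. Plugging in and using the anticommutation relations to collapse the cross terms, I would show
\begin{equation*}
  [L, \Lambda_\omega]
  = \sum_{j,k=1}^{n} \bigl(e_j \bar e_j \bar e_k^* e_k^* - \bar e_k^* e_k^* e_j \bar e_j\bigr)
  = \sum_{j=1}^{n} \bigl(e_j e_j^* - \bar e_j^* \bar e_j\bigr);
\end{equation*}
only the diagonal terms $j = k$ survive, and the sign bookkeeping when moving $\bar e_k^*$ and $e_k^*$ past $e_j \bar e_j$ is the one place that has to be done carefully. This will be the main obstacle — the rest is formal.

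Finally, evaluating the resulting number operator on a standard monomial $u = \zeta_I \wedge \bar\zeta_J$ with $|I| = p$, $|J| = q$, one observes that $e_j e_j^* u = u$ when $j \in I$ and $0$ otherwise, while $\bar e_j^* \bar e_j u = u$ when $j \notin J$ and $0$ otherwise. Summing over $j$ gives $|I| - (n - |J|) = p + q - n$, so $[L, \Lambda_\omega] u = (p + q - n) u$, as claimed. A shorter but less elementary alternative would be to observe that $(L, \Lambda_\omega, [L, \Lambda_\omega])$ defines an $\mathfrak{sl}_2$-action on $\wedge^\bullet T^*_{X,x}$ for which $\wedge^{p,q}$ sits in the weight space with weight $p + q - n$.
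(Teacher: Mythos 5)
Your argument is correct: the anticommutation relations you state are the right ones for a unitary coframe, the cross terms with $j\neq k$ do cancel (each of the four transpositions needed to move $\bar e_k^*e_k^*$ past $e_j\bar e_j$ contributes a sign $-1$, for a net $+1$), the diagonal terms reduce to $e_je_j^*-\bar e_j^*\bar e_j$, and the evaluation on monomials gives $|I|-(n-|J|)=p+q-n$. Note, however, that the paper offers no proof of this lemma at all --- it is quoted verbatim from Demailly's book (Chapter VI, (5.9)) --- so there is no in-paper argument to compare against; your creation/annihilation computation is essentially the standard textbook proof of this commutation identity, and the $\mathfrak{sl}_2$ remark at the end is the usual conceptual repackaging of the same fact.
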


Notice that  for every smooth strictly plurisubharmonic function $\phi$ on $D$, $i\ddbar \phi$ is a  K\"ahler metric on $D$. Then by the above lemma,  for every $u\in\wedge^{p,q}T_D^*$, we have 
$$
[i\ddbar \phi, \Lambda_{i\ddbar \phi}]u=(p+q-n)u.
$$
 Observing  this, if $(D,\varphi)$ is $L^2$-optimal, then
it is possible to solve a certain type of $\dbar$-equation on $D'$.

\begin{lem}\label{bbb}
Let $D$ be a domain in $\mc^n$ and $\varphi$ an upper semi-continuous function on $D$.
  Assume that $(D,\varphi)$ is $L^2$-optimal. Then for any $f\in L_{n,1}^2(D';\omega_{g,\gamma},\varphi+a\psi_{g,\gamma})\cap\Ker(\dbar)$, $a>0$, there is  a $u\in L_{n,0}^2(D';\omega_{g,\gamma},\varphi+a\psi_{g,\gamma})$ such that $\dbar u=f$ and
  \begin{equation*}
    \int_{D'}^{}|u|^2_{\omega_{g,\gamma}}e^{-\varphi-a\psi_{g,\gamma}}dV_{\omega_{g,\gamma}}\le\int_{D'}^{}\langle B_{a}^{-1}f,f\rangle_{\omega_{g,\gamma}}e^{-\varphi-a\psi_{g,\gamma}}dV_{\omega_{g,\gamma}},
  \end{equation*}
  provided that the right-hand side is finite, where $B_a:=B_{a\psi_{g,\gamma},\omega_{g,\gamma}}=[a i\ddbar\psi_{g,\gamma},\Lambda_{\omega_{g,\gamma}}]$.
\end{lem}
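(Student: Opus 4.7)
The plan is to regularize the (singular on $\{g=0\}$) data $(\omega_{g,\gamma},\psi_{g,\gamma})$ to smooth data on all of $D$, apply the $L^2$-optimal condition of $(D,\varphi)$ to the regularized problem, and pass to the limit. For $\eps>0$, set
\[
\psi_\eps:=\gamma|z|^2+\log(|g|^2+\eps),\qquad \omega_\eps:=i\ddbar\psi_\eps;
\]
then $\psi_\eps$ is smooth and strictly plurisubharmonic on $D$, $\omega_\eps$ is a smooth K\"ahler metric on $D$, and as $\eps\to0^+$ we have $\psi_\eps\searrow\psi_{g,\gamma}$ and $\omega_\eps\to\omega_{g,\gamma}$ pointwise on $D'$.

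The crucial algebraic observation is that $i\ddbar(a\psi_\eps)=a\omega_\eps$, so the identity $[\omega,\Lambda_\omega]u=(p+q-n)u$ recalled just before the statement yields $B_{a\psi_\eps,\omega_\eps}=a\,\Id$ on $(n,1)$-forms, whence $B^{-1}=(1/a)\Id$. I would first extend $f$ by zero to $D\setminus D'=\supp(\calO_D/\calI(\varphi))\cup\{g=0\}$; this is a complex analytic subset of $D$ (by the coherence from Lemma~\ref{lem coherence} and analyticity of $\{g=0\}$), so Lemma~\ref{lem L2negligible} ensures $\dbar f=0$ on all of $D$ in the distribution sense. Applying the $L^2$-optimal property with $\phi=a\psi_\eps$ and $\omega=\omega_\eps$ then produces $u_\eps$ on $D$ solving $\dbar u_\eps=f$ with
\[
\int_D|u_\eps|^2_{\omega_\eps}e^{-\varphi-a\psi_\eps}dV_{\omega_\eps}\leq\frac{1}{a}\int_D|f|^2_{\omega_\eps}e^{-\varphi-a\psi_\eps}dV_{\omega_\eps}.
\]

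It remains to pass to the limit. By the $\omega$-independence of the integrand $\langle B_\phi^{-1}f,f\rangle_\omega dV_\omega$ at fixed $\phi$ (pointed out in the remark after the definition), the right-hand side can be rewritten on $D'$ as $\int_{D'}\langle B_{a\psi_\eps,\omega_{g,\gamma}}^{-1}f,f\rangle_{\omega_{g,\gamma}}e^{-\varphi-a\psi_\eps}dV_{\omega_{g,\gamma}}$, and since $B_{a\psi_\eps,\omega_{g,\gamma}}=a[\omega_\eps,\Lambda_{\omega_{g,\gamma}}]\to a\,\Id$ pointwise on $D'$ while $e^{-a\psi_\eps}\nearrow e^{-a\psi_{g,\gamma}}$, this family is to be controlled uniformly by the target $\frac{1}{a}\int_{D'}|f|^2_{\omega_{g,\gamma}}e^{-\varphi-a\psi_{g,\gamma}}dV_{\omega_{g,\gamma}}$. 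A diagonal Banach--Alaoglu argument across the weighted Hilbert spaces $L^2(D,e^{-\varphi-a\psi_\delta}dV_{\omega_\delta})$ then extracts a weakly convergent subsequence $u_{\eps_j}\rightharpoonup u$ on $D$; weak closedness of $\dbar$ gives $\dbar u=f$, and Fatou's lemma, combined with the metric-independence $|u_\eps|^2_\omega dV_\omega=i^{n^2}u_\eps\wedge\bar u_\eps$ for $(n,0)$-forms and the monotonicity $e^{-a\psi_\eps}\nearrow e^{-a\psi_{g,\gamma}}$, transfers the estimate to $u$ on $D'$. The main technical obstacle I expect is in the uniform control of the right-hand side across $\eps$: because the regularization $\omega_\eps$ need not dominate $\omega_{g,\gamma}$ pointwise when $g$ has more than one component, the uniform bound must proceed via the $\omega$-independence identity and a spectral analysis of $[\omega_\eps,\Lambda_{\omega_{g,\gamma}}]$ on $(n,1)$-forms rather than a naive pointwise comparison of $|f|^2_\omega dV_\omega$.
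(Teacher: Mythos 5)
Your overall architecture --- regularize $\psi_{g,\gamma}$ to $\psi_\eps=\gamma|z|^2+\log(|g|^2+\eps)$, observe that $i\ddbar(a\psi_\eps)=a\omega_\eps$ forces $B_{a\psi_\eps,\omega_\eps}=a\,\Id$ on $(n,1)$-forms, extend $f$ by zero across the analytic set $D\setminus D'$ via Lemma \ref{lem L2negligible}, apply the $L^2$-optimal condition with $\phi=a\psi_\eps$ and $\omega=\omega_\eps$, and pass to the limit by Banach--Alaoglu, weak closedness of $\dbar$ and Fatou --- is exactly the paper's. However, the one step that carries the entire content of the lemma, namely the bound
$$\frac1a\int_D|f|^2_{\omega_\eps}e^{-\varphi-a\psi_\eps}\,dV_{\omega_\eps}\ \le\ \frac1a\int_{D'}|f|^2_{\omega_{g,\gamma}}e^{-\varphi-a\psi_{g,\gamma}}\,dV_{\omega_{g,\gamma}}\qquad\text{uniformly in }\eps,$$
is not proved in your proposal: you explicitly defer it as ``the main technical obstacle'' and gesture at a ``spectral analysis of $[\omega_\eps,\Lambda_{\omega_{g,\gamma}}]$'' that you do not carry out. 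Without this bound you cannot even invoke the $L^2$-optimal condition to produce $u_\eps$ with a useful estimate (the definition requires finiteness of the right-hand side, and the constant must not degenerate as $\eps\to0$), so the proof is incomplete at its central estimate.

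Moreover, the reason you give for rejecting the direct comparison is mistaken. Although $\omega_\eps$ does not dominate $\omega_{g,\gamma}$, the identity
$$i\ddbar\log(|g|^2+\eps)=\frac{|g|^2}{|g|^2+\eps}\,i\ddbar\log|g|^2+\frac{\eps\,|g|^2}{(|g|^2+\eps)^2}\,i\partial\log|g|^2\wedge\dbar\log|g|^2\ \ge\ \frac{|g|^2}{|g|^2+\eps}\,i\ddbar\log|g|^2,$$
combined with $i\ddbar\log|g|^2\ge0$, gives the pointwise inequality $\omega_\eps\ge\frac{|g|^2}{|g|^2+\eps}\,\omega_{g,\gamma}$ for any number of components of $g$. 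The monotonicity of $|f|^2_{\omega}\,dV_{\omega}$ in $\omega$ for $(n,1)$-forms then yields $|f|^2_{\omega_\eps}dV_{\omega_\eps}\le\frac{|g|^2+\eps}{|g|^2}\,|f|^2_{\omega_{g,\gamma}}dV_{\omega_{g,\gamma}}$, and the extra factor $\frac{|g|^2+\eps}{|g|^2}$ is absorbed by the ratio of the weights $e^{-a\psi_\eps}/e^{-a\psi_{g,\gamma}}=\bigl(|g|^2/(|g|^2+\eps)\bigr)^{a}$ as soon as $a\ge1$ (which holds in the application, where $a=m+\varepsilon$ with $m\ge1$). This ``naive pointwise comparison'' is precisely how the paper closes the argument; your proposal discards it and substitutes nothing concrete in its place.
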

\begin{proof}
Since $\varphi$ is upper semi-continuous,
	then by Lemma \ref{lem L2negligible}, $f$ can be seen as a $\dbar$-closed $(n,1)$-form on  $D$.
  Let $g_\varepsilon:=(g_1, g_2,\cdots, g_p,\varepsilon) \in \calO(D)^{\oplus (p+1)}$ and $\psi_{g_\varepsilon,\gamma}:=\gamma|z|^2+\log|g_\varepsilon|^2$.
Denote  $B_{a,\varepsilon}:=B_{a\psi_{g_\varepsilon,\gamma},\omega_{g_\varepsilon,\gamma}}=[a i\ddbar\psi_{g_\varepsilon,\gamma},\Lambda_{\omega_{g_\varepsilon,\gamma}}]$. Since  $f$ is an $(n,1)$-form, we have $$B_{a,\varepsilon}^{-1}f=B_{a}^{-1}f=a^{-1}f.$$
Notice that
  \begin{align*}
   \omega_{g_\varepsilon,\gamma}:=& i\partial\dbar \psi_{g_\varepsilon,\gamma}\\
   =&i\gamma\partial\dbar |z|^2+ i\partial\dbar \log(e^{\log |g|^2}+\varepsilon^2)\\
   =&i\gamma\partial\dbar |z|^2+\frac{e^{\log |g|^2}}{e^{\log |g|^2}+\varepsilon^2} i\partial\dbar \log|g|^2+\frac{\varepsilon^2e^{\log |g|^2}}{(e^{\log |g|^2}+\varepsilon^2)^2}i\partial \log|g|^2\wedge\dbar \log|g|^2\\
   \ge&i\gamma\partial\dbar |z|^2+\frac{e^{\log |g|^2}}{e^{\log |g|^2}+\varepsilon^2} i\partial\dbar \log|g|^2\\
  \ge& \frac{ |g|^2}{ |g|^2+\varepsilon^2}\omega_{g,\gamma},
  \end{align*}
  then \begin{align*}
        & \int_{D}^{}\langle B_{a,\varepsilon}^{-1}f,f\rangle_{\omega_
         {g_\varepsilon,\gamma}}e^{-\varphi-a\psi_{g_\varepsilon,\gamma}}
         dV_{\omega_{g_\varepsilon,\gamma}}\\=&a^{-1}\int_{D'}^{}|f|^2_{\omega_
         {g_\varepsilon,\gamma}}e^{-\varphi-a\psi_{g_\varepsilon,\gamma}}
         dV_{\omega_{g_\varepsilon,\gamma}}\\
         \le& a^{-1}\int_{D'}^{}\frac{ |g|^2+\varepsilon^2}{ |g|^2}|f|^2_{\omega_{g,\gamma}}e^{-\varphi-a\psi_{g_\varepsilon,\gamma}}
         dV_{\omega_{g,\gamma}}\\
         \le& a^{-1}\int_{D'}^{}| f|^2_{\omega_{g,\gamma}}e^{-\varphi-a\psi_{g,\gamma}}
         dV_{\omega_{g,\gamma}}\\
         =&\int_{D'}^{}\langle B_{a}^{-1}f,f\rangle_{\omega_{g,\gamma}}e^{-\varphi-a\psi_{g,\gamma}}
         dV_{\omega_{g,\gamma}}.
       \end{align*}

   Since $(D,\varphi)$ is $L^2$-optimal,  there is an $(n,0)$-form $u_\varepsilon$ such that $\dbar u_\varepsilon=f$ on $D$ and
  \begin{align*}
    \int_{D}^{}|u_\varepsilon|^2_{\omega_{g_\varepsilon,\gamma}}
    e^{-\varphi-\psi_{g_\varepsilon,\gamma}}dV_{\omega_{g_\varepsilon,\gamma}}
    \le&\int_{D}^{}\langle B_{a,\varepsilon}^{-1}f,f\rangle_
    {\omega_{g_\varepsilon,\gamma}}
    e^{-\varphi-\psi_{g_\varepsilon,\gamma}}dV_{\omega_{g_\varepsilon,\gamma}}\\
    \le& \int_{D'}^{}\langle B_{a}^{-1}f,f\rangle_{\omega_{g}}
    e^{-\varphi-a\psi_{g,\gamma}}
         dV_{\omega_{g,\gamma}}.
  \end{align*}
  Notice that $u_\varepsilon$ is an  $(n,0)$-form, hence $|u_\varepsilon|^2_{\omega}dV_{\omega}$ is independent of the choice of the metric $\omega$ on the manifold. In addition, $\psi_{g_\varepsilon,\gamma}$ decreasingly converges to $\psi_{g,\gamma}$ as $\varepsilon\to0$,
  then by the Banach-Alaoglu-Bourbaki theorem, we can take a sequence $\varepsilon_j\to0$ such that $u_{\varepsilon_j}$ is weakly $L^2$ convergent to a limit $u$ on $D$. Then it follows from the weakly closedness of $\dbar$ that $\dbar u=f$ on $D$. And by Fatou's lemma and the dominated convergence theorem, we have
  \begin{align*}
     \int_{D'}^{}|u|^2_{\omega_{g,\gamma}}e^{-\varphi-\gamma|z|^2-a\log(|{g}|^2+\delta)}
     dV_{\omega_{g,\gamma}}
     &\le \liminf_{\varepsilon_j\to 0} \int_{D'} |u_{\varepsilon_j}|^2_{\omega_{g,\gamma}}e^{-\varphi-\gamma|z|^2-a\log(|{g}|^2+\delta)}dV_
     {\omega_{g,\gamma}}\\
     &= \liminf_{\varepsilon_j\to 0} \int_{D'} |u_{\varepsilon_j}|^2_{\omega_{g_{\varepsilon_j},\gamma}}e^{-\varphi-\gamma|z|^2-a\log(|{g}|^2+\delta)}
     dV_{\omega_{g_{\varepsilon_j},\gamma}}\\
     &\le\liminf_{\delta>\varepsilon^2_j\to 0} \int_{D'} |u_{\varepsilon_j}|^2_{\omega_{g_{\varepsilon_j},\gamma}}e^{-\varphi-\gamma|z|^2-a\log(|{g}|^2
     +\varepsilon_j^2)}dV_{\omega_{g_{\varepsilon_j},\gamma}}\\
    &\le\int_{D'}^{}\langle B_{a}^{-1}f,f\rangle_{\omega_{g,\gamma}}e^{-\varphi-a\psi_{g,\gamma}}
         dV_{\omega_{g,\gamma}},
  \end{align*}
  where the equality is owing to that $u$ is an $(n,0)$-form.
  Let $\delta\to 0$, it follows from the monotone convergence theorem that
  \begin{align*}
    \int_{D'}^{}|u|^2_{\omega_{g,\gamma}}e^{-\varphi-a\psi_{g,\gamma}}dV_{\omega_{g,\gamma}}\le\int_{D'}^{}\langle B_{a}^{-1}f,f\rangle_{\omega_{g,\gamma}}
    e^{-\varphi-a\psi_{g,\gamma}}dV_{\omega_{g,\gamma}}.
  \end{align*}
\end{proof}

Now we can deduce the desired estimate from the above lemma.

\begin{lem}\label{lem:weak-basic-inequality'}
 Let $D$ be a domain in $\mc^n$  and $\varphi$  an upper semi-continuous function on $D$.
  	Assume that $(D,\varphi)$ is $L^2$-optimal,
	then on $(D';\omega_{g,\gamma},\varphi+a\psi_{g,\gamma})$, we have
	\begin{equation*}
		a\| \alpha\|^2_{\omega_{g,\gamma},\varphi+a\psi_{g,\gamma}}=\iinner{ B_{a} \alpha,\alpha }_{\omega_{g,\gamma},\varphi+a\psi_{g,\gamma}}
			\le 	
		\|\dbar^{*}_{\varphi+a\psi_{g,\gamma}} \alpha \|^2_{\omega_{g,\gamma},\varphi+a\psi_{g,\gamma}}
	\end{equation*}
	holds for $a>0$ and any $(n,1)$-form
$\alpha \in \Dom(\dbar^*_{\varphi+a\psi_{g,\gamma}}) \cap \Ker(\dbar)$.

\end{lem}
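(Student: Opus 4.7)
My plan is to derive the Bochner-type inequality from Lemma \ref{bbb} via a standard duality argument, after first observing that the tautological choice $\omega_{g,\gamma}=i\pa\dbar\psi_{g,\gamma}$ makes the curvature operator act as a scalar on $(n,1)$-forms.

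First, I would verify the equality part of the statement. By definition,
$$
B_{a\psi_{g,\gamma},\omega_{g,\gamma}} \;=\; [i a\pa\dbar\psi_{g,\gamma},\Lambda_{\omega_{g,\gamma}}] \;=\; a\,[\omega_{g,\gamma},\Lambda_{\omega_{g,\gamma}}],
$$
and the preceding lemma $[\omega,\Lambda_\omega]u = (p+q-n)u$ on $(p,q)$-forms specializes, on $(n,1)$-forms, to the identity. Hence $B_{a\psi_{g,\gamma},\omega_{g,\gamma}}\alpha = a\alpha$ pointwise, which immediately yields
$\iinner{B_{a\psi_{g,\gamma},\omega_{g,\gamma}}\alpha,\alpha}_{\omega_{g,\gamma},\varphi+a\psi_{g,\gamma}} = a\|\alpha\|^2_{\omega_{g,\gamma},\varphi+a\psi_{g,\gamma}}$.

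Next, I would apply Lemma \ref{bbb} with $f:=\alpha$, which is $\dbar$-closed by hypothesis. By the previous step, $\inner{B_{a\psi_{g,\gamma},\omega_{g,\gamma}}^{-1}\alpha,\alpha} = a^{-1}|\alpha|^2$, so the right-hand side in Lemma \ref{bbb} equals $a^{-1}\|\alpha\|^2_{\omega_{g,\gamma},\varphi+a\psi_{g,\gamma}}$, which is finite. This yields an $(n,0)$-form $u$ on $D'$ with $\dbar u=\alpha$ and
$$
\|u\|^2_{\omega_{g,\gamma},\varphi+a\psi_{g,\gamma}} \;\leq\; a^{-1}\|\alpha\|^2_{\omega_{g,\gamma},\varphi+a\psi_{g,\gamma}}.
$$
Since $\alpha\in\Dom(\dbar^*_{\varphi+a\psi_{g,\gamma}})$ and $u\in L^2$ with $\dbar u=\alpha\in L^2$, the Hilbert-space adjoint identity applies, and Cauchy--Schwarz gives
$$
\|\alpha\|^2_{\omega_{g,\gamma},\varphi+a\psi_{g,\gamma}} \;=\; \iinner{\alpha,\dbar u}_{\omega_{g,\gamma},\varphi+a\psi_{g,\gamma}} \;=\; \iinner{\dbar^*_{\varphi+a\psi_{g,\gamma}}\alpha,\,u}_{\omega_{g,\gamma},\varphi+a\psi_{g,\gamma}} \;\leq\; a^{-1/2}\,\|\dbar^*_{\varphi+a\psi_{g,\gamma}}\alpha\|\,\|\alpha\|.
$$
Dividing by $\|\alpha\|$ (the case $\alpha\equiv 0$ being trivial) and squaring produces the desired inequality $a\|\alpha\|^2\leq \|\dbar^*_{\varphi+a\psi_{g,\gamma}}\alpha\|^2$.

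The only genuinely delicate point of the argument has already been absorbed into Lemma \ref{bbb}: one must handle the fact that $\omega_{g,\gamma}$ is only a possibly incomplete K\"ahler metric on $D'$ (degenerating near $\{g=0\}$) and that $\varphi$ is merely upper semi-continuous, while transferring the $L^2$-optimal hypothesis from $(D,\varphi)$ to the auxiliary data $(D',\omega_{g,\gamma},\varphi+a\psi_{g,\gamma})$. Once Lemma \ref{bbb} is in hand, the remaining work is a textbook duality argument which is effortless precisely \emph{because} the matched choice of K\"ahler metric $\omega_{g,\gamma}=i\pa\dbar\psi_{g,\gamma}$ reduces $B$ to the scalar $a\cdot\Id$, producing the sharp constant $a$ with no further computation.
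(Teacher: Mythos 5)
Your proposal is correct and follows essentially the same route as the paper: both invoke Lemma \ref{bbb} to solve $\dbar u=B_{a\psi_{g,\gamma},\omega_{g,\gamma}}\alpha=a\alpha$ (your choice $f=\alpha$ differs only by the harmless scalar $a$) and then conclude by the adjoint identity plus Cauchy--Schwarz. Your explicit verification that $[\omega,\Lambda_\omega]$ acts as the identity on $(n,1)$-forms, so that $B=a\cdot\Id$, is a point the paper uses implicitly, but otherwise the arguments coincide.
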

\begin{proof}
	
For any $\alpha \in  \Dom(\dbar^*_{\varphi+a\psi_{g,\gamma}})
	\cap \Ker (\dbar)$, we have $ B_{a}\alpha=a\alpha\in \Ker(\dbar)$.
Then by Lemma \ref{bbb}, the assumption that  $(D,\varphi)$ is $L^2$-optimal implies that  there exists $u \in L^2_{n,0}(D';\omega_{g,\gamma},\varphi+a\psi_{g,\gamma})$ such that $\dbar u=B_{a}\alpha$ with
\begin{align*}
  \|u  \|_{\omega_{g,\gamma},\varphi+a\psi_{g,\gamma}}^{2}\le &  \iinner{ B_{a} ^{-1} B_{a}\alpha,B_{a}\alpha }_{\omega_{g,\gamma},\varphi+a\psi_{g,\gamma}} \\
  =& \iinner{ B_{a} \alpha,\alpha }_{\omega_{g,\gamma},\varphi+a\psi_{g,\gamma}}.
\end{align*}
Then
\begin{align}
	&|\iinner{  B_{a}\alpha,\alpha }_{\omega_{g,\gamma},\varphi+a\psi_{g,\gamma}} |^{2}\nonumber\\
	=&|\iinner{ \dbar u, \alpha }_{\omega_{g,\gamma},\varphi+a\psi_{g,\gamma}}|^{2}\nonumber\\
	=&|\iinner{ u,\dbar^{*}_{\varphi+a\psi_{g,\gamma}} \alpha }_{\omega_{g,\gamma},\varphi+a\psi_{g,\gamma}}|^{2}\nonumber\\
	\le&  \|u  \|_{\omega_{g,\gamma},\varphi+a\psi_{g,\gamma}}^{2} \cdot \|\dbar^{*}_{\omega_{g,\gamma},\varphi+a\psi_{g,\gamma}} \alpha \|_{\omega_{g,\gamma},\varphi+a\psi_{g,\gamma}}^{2} \nonumber \\
	\le&   \iinner{ B_{a} \alpha,\alpha }_{\omega_{g,\gamma},\varphi+a\psi_{g,\gamma}}\cdot
	\|\dbar^{*}_{\varphi+a\psi_{g,\gamma}} \alpha \|^2_{\omega_{g,\gamma},\varphi+a\psi_{g,\gamma}}.\nonumber
\end{align}

Thus
$$
\iinner{ B_{a} \alpha,\alpha}_{\omega_{g,\gamma},\varphi+a\psi_{g,\gamma}}
\le 	
\|\dbar^{*}_{\varphi+a\psi_{g,\gamma}} \alpha \|^2_{\omega_{g,\gamma},\varphi+a\psi_{g,\gamma}}.
$$
\end{proof}

\subsection{A variant of Skoda's basic inequality}

In \cite{Skoda}, Skoda obtained the following inequality. It can be viewed as a slight modification of  \cite[Equation (2.12)]{Skoda}, obtained by taking $\beta = \alpha q$ with $q=\min\{n,p-1\}$, and its proof is included in the proof of \cite[Proposition 2]{Skoda}.

\begin{lem}[Skoda's basic inequality \cite{Skoda}]\label{for-skoda}
For any $v=(v_1,\cdots,v_p)$,
where all $v_j=\sum_{k=1}^{n}v_{jk} d\bar z_k $ are smooth $(0,1)$-forms,
we have
  $$
  \min\{n,p-1\}\sum_{j,k,l}^{}\frac{\partial^2\log|g|^2}{\partial z_k\partial\bar z_l}v_{jk}\bar v_{jl}\ge |g|^{2}\left|\sum_{j,k}\frac{\partial(g_je^{-\log|g|^2})}{\partial z_k}v_{jk}\right|^2.$$
\end{lem}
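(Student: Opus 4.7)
The plan is to reduce the inequality to a purely algebraic statement about the Frobenius norm and trace of a single $p\times p$ matrix whose rank is at most $m=\min\{n,p-1\}$.

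The key change of variable is to introduce, for each $j$ and $k$, the tangential component of $\partial g_j/\partial z_k$ with respect to $g$:
$$
b_{jk} \;:=\; \frac{\partial g_j}{\partial z_k} \;-\; \frac{g_j}{|g|^2}\sum_{i=1}^{p} \bar g_i \frac{\partial g_i}{\partial z_k}.
$$
By construction these satisfy the pointwise orthogonality relation $\sum_{j} \bar g_j\, b_{jk} = 0$. A direct calculation I would then carry out gives the two identities
$$
\frac{\partial^2 \log|g|^2}{\partial z_k \partial \bar z_l} \;=\; \frac{1}{|g|^2}\sum_{j=1}^{p} b_{jk}\,\overline{b_{jl}}, \qquad \frac{\partial(g_j e^{-\log|g|^2})}{\partial z_k} \;=\; \frac{b_{jk}}{|g|^2}.
$$
The second is immediate from the quotient rule. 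The first follows by expanding the standard formula for the Fubini--Study-type Hessian and substituting $\partial_k g_j = b_{jk} + g_j(\partial_k|g|^2)/|g|^2$; the cross terms in the resulting double sum vanish thanks to the orthogonality $\sum_j \bar g_j b_{jk} = 0$, and the remaining rank-one contribution cancels the subtracted term.

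Next I would introduce the $p\times p$ matrix $M$ with entries $M_{ji} := \sum_{k=1}^{n} b_{ik} v_{jk}$. Plugging the two identities into the inequality, the LHS becomes $\frac{m}{|g|^2}\sum_{i,j}|M_{ji}|^2 = \frac{m}{|g|^2}\|M\|_{F}^{2}$, while the RHS becomes $\frac{1}{|g|^2}\bigl|\sum_j M_{jj}\bigr|^2 = \frac{1}{|g|^2}|\mathrm{tr}(M)|^2$. The claim thus reduces to the linear-algebraic inequality
$$
m\,\|M\|_{F}^{2} \;\geq\; |\mathrm{tr}(M)|^{2}.
$$

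The main (and only non-routine) step is then to bound $\mathrm{rank}(M)$. Writing $M = VB^{\top}$, where $V=(v_{jk})$ and $B=(b_{ik})$ are $p\times n$ matrices, we trivially have $\mathrm{rank}(M)\le n$; more importantly, the orthogonality relation says every column of $B$ lies in the hyperplane $g^{\perp}\subset \mc^{p}$, so $\mathrm{rank}(B)\le p-1$. Hence $\mathrm{rank}(M) \le \min\{n,p-1\}=m$. The desired trace/Frobenius inequality for any $p\times p$ matrix of rank at most $m$ is then a standard SVD computation: for $M=U\Sigma V^{*}$ with at most $m$ nonzero singular values $\sigma_1,\dots,\sigma_m$, we have $\mathrm{tr}(M)=\sum_{i=1}^{m}\sigma_i (V^{*}U)_{ii}$, and Cauchy--Schwarz together with the unitarity bound $|(V^{*}U)_{ii}|\le 1$ produces $|\mathrm{tr}(M)|^2 \le m\,\|M\|_{F}^{2}$, closing the argument. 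The only real conceptual hurdle is guessing the decomposition $b_{jk}$ and the clean form of the Hessian identity; once these are in place, the proof proceeds by linear algebra with no analytic subtlety.
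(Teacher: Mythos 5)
The paper does not prove this lemma at all: it quotes it verbatim from Skoda's paper with the remark that "its proof is rather technical," so there is no in-paper argument to compare against. Your proof is correct and is in fact essentially the classical one (as in Skoda's original article and Demailly's book, Chapter VIII): the orthogonal decomposition $b_{jk}=\partial_k g_j-|g|^{-2}g_j\sum_i\bar g_i\partial_k g_i$, the two identities expressing the Hessian of $\log|g|^2$ and $\partial_k(g_je^{-\log|g|^2})$ in terms of $b_{jk}$, the rank bound $\mathrm{rank}(M)\le\min\{n,p-1\}$ coming from the columns of $B$ lying in $g^{\perp}$, and the trace--Frobenius inequality for a rank-$m$ matrix all check out (including the degenerate case $p=1$, where $B=0$).
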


In order to estimate the last term in \eqref{eq:ineq-functional-1}: $2\text{Re}\langle T_2(T_1^* u), v\rangle_{\omega_{g,\gamma},\varphi+a\psi_{g,\gamma}}$, we need to reformulate Skoda's basic inequality as follows.

\begin{lem}\label{for-skoda'}For $v=(v_1,\cdots,v_p)$, where all $v_j=\sum_{k=1}^{n}v_{jk}d\bar z_k$ are $(0,1)$-forms, we have
  $$|g|^2\sum_j\langle \dbar\left(\bar g_je^{-\log|g|^2}\right), v_j\rangle_{\omega_{g,\gamma}}\le
\min\{n,p-1\}|v|^2_{\omega_{g,\gamma}}.$$
\end{lem}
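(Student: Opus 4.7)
The plan is to derive Lemma~\ref{for-skoda'} from Skoda's basic inequality (Lemma~\ref{for-skoda}) by a Cauchy--Schwarz argument, after using Hermitian duality to recast Lemma~\ref{for-skoda} as a pointwise upper bound on the $\omega_{\log|g|^2}$-norm of each $\dbar(\bar g_j e^{-\log|g|^2})$.

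Writing $C_j := g_j e^{-\log|g|^2}$ and $\psi := \log|g|^2$, Lemma~\ref{for-skoda} takes the shape $\min\{n,p-1\}\,Q(v) \ge |g|^2 |L(v)|^2$, where $Q(v) = \sum_{j,k,l} \psi_{k\bar l} v_{jk} \bar v_{jl}$ is the Hermitian quadratic form attached to the semi-positive metric $\omega_\psi := i\ddbar \psi$ and $L(v) = \sum_{j,k} (\pa_k C_j) v_{jk}$ is a $\mc$-linear functional. Invoking the duality $\sup_v |L(v)|^2/Q(v) = \|L\|^2_{Q^{-1}}$ together with the identification $\pa_{\bar k} \bar C_j = \overline{\pa_k C_j}$ of the coefficients of $\dbar\bar C_j$ in the basis $(d\bar z_k)$ with the conjugates of those of $L$, this inequality is equivalent to the pointwise estimate
\begin{equation*}
|g|^2 \sum_j |\dbar \bar C_j|^2_{\omega_\psi} \le \min\{n,p-1\}.
\end{equation*}
Since $\omega_{g,\gamma} = \gamma\, i\ddbar |z|^2 + \omega_\psi \ge \omega_\psi$ as semi-positive $(1,1)$-forms, the monotonicity of Hermitian matrix inversion gives $\omega_{g,\gamma}^{-1} \le \omega_\psi^{-1}$, hence $|\alpha|^2_{\omega_{g,\gamma}} \le |\alpha|^2_{\omega_\psi}$ for every $(0,1)$-form $\alpha$, and the above bound upgrades to
\begin{equation*}
|g|^2 \sum_j |\dbar \bar C_j|^2_{\omega_{g,\gamma}} \le \min\{n,p-1\}.
\end{equation*}

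A vector-valued Cauchy--Schwarz inequality in the Hilbert direct sum of $(0,1)$-forms with respect to $\omega_{g,\gamma}$ then yields
\begin{equation*}
\left| \sum_j \inner{\dbar \bar C_j, v_j}_{\omega_{g,\gamma}} \right|^2 \le \left( \sum_j |\dbar \bar C_j|^2_{\omega_{g,\gamma}} \right) |v|^2_{\omega_{g,\gamma}} \le \frac{\min\{n,p-1\}}{|g|^2}\, |v|^2_{\omega_{g,\gamma}},
\end{equation*}
which after multiplying by $|g|^2$ produces the desired estimate (the left-hand side of the lemma being read as the modulus squared of the pairing, in parallel with the $|{\cdot}|^2$ on the right-hand side of Skoda's basic inequality).

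The step that genuinely requires care is the opening duality move: one has to keep straight the conjugation interchanging the $(1,0)$-form $\pa C_j$ appearing in $L$ with the $(0,1)$-form $\dbar\bar C_j$ whose norm is being controlled, together with the swap between the ``lower-index'' metric $\psi_{k\bar l}$ in $Q$ and the ``upper-index'' inverse $\psi^{\bar l k}$ appearing in the dual norm. Once this identification is set up, the matrix-inverse monotonicity and the Hilbert-space Cauchy--Schwarz are routine.
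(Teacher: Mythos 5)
Your argument is correct and is essentially the paper's own proof in a different presentation: the paper implements your opening duality move by the explicit substitution $\tilde v_j=A^{-T}v_j$ into Skoda's basic inequality and performs the metric comparison on the primal side via $\left(\frac{\pa^2\log|g|^2}{\pa z_k\pa\bar z_l}\right)_{kl}\le A$, which also sidesteps the momentary appeal to $\omega_\psi^{-1}$ for the possibly degenerate form $i\pa\dbar\log|g|^2$ (your dual-seminorm reading survives this, since Skoda's inequality forces the functional to annihilate the kernel of that form). Your reading of the left-hand side as the modulus squared of the pairing is exactly how the lemma is proved and used in the paper.
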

\begin{proof}
Since  $A:=\left(\frac{\partial^2\psi_{g,\gamma}}{\partial z_k\partial\bar z_l}\right)_{kl}=\gamma I_n+\left(\frac{\partial^2\log|g|^2}{\partial z_k\partial\bar z_l}\right)_{kl}$ is Hermitian positive,
set $A^{-1}=(A^{lk})$,
we can take
  $\tilde v_j=A^{-T}v_j=\sum_{k,l=1}^n A^{kl}v_{jk} d\bar{z}_l$, $1\le j\le p$ and $\tilde v=(\tilde v_1,\cdots,\tilde v_p)$. Then
   \begin{align*}
    &|g|^2\left|\sum_j\langle \dbar\left(\bar g_je^{-\log|g|^2}\right), v_j\rangle_{\omega_{g,\gamma}}\right|^2\\
    =&|g|^2\left|\sum_j\langle \dbar\left(\bar g_je^{-\log|g|^2}\right), A^{T}\tilde v_j\rangle_{\omega_{g,\gamma}}\right|^2\\
    =&|g|^{2}\left|\sum_{j,k}\frac{\partial(g_je^{-\log|g|^2})}{\partial z_k}\tilde v_{jk}\right|^2\\
    \le &\min\{n,p-1\}\sum_{j,k,l}^{}\frac{\partial^2(\log|g|^2)}{\partial z_k\partial\bar z_l}\tilde v_{jk}\bar{\tilde v}_{jl}\\
    \le &\min\{n,p-1\} \sum_{j,k,l} A_{kl} \tilde v_{jk}\bar{\tilde v}_{jl}\\
    =&\min\{n,p-1\} \sum_{j,k,l} A_{kl}  \left(\sum_{\alpha} A^{\alpha k} v_{j\alpha} \right) \overline{\left(\sum_{\beta} A^{\beta k} v_{j\beta} \right)}   \\
    =&\min\{n,p-1\} \sum_{j,\alpha,\beta} A^{\alpha \beta}v_{j\alpha} \bar{v}_{j\beta}  \\
   =&\min\{n,p-1\} |v|^2_{\omega_{g,\gamma}},
  \end{align*}
  where the first inequality is due to Lemma \ref{for-skoda} and the second inequality is owing to $\left(\frac{\partial^2\log|g|^2}{\partial z_k\partial\bar z_l}\right)_{kl}\le A$.
  \end{proof}

Finally, combining  \eqref{eq:ineq-functional-1}, Lemma \ref{lem:weak-basic-inequality'} and Lemma \ref{for-skoda'}, we obtain the following basic estimate for the $L^2$ division theorem for $L^2$-optimal pairs.

\begin{lem}\label{lem:skoda-basic-inequality'} With notations as above, we have
\begin{align*}
  &  \| T_1^* u+ T_2^ * v\|_{\omega_{g,\gamma},\varphi+a\psi_{g,\gamma}}^2\\
  \ge & (1-\frac{1}{b}) \|u\|^2_{\omega_{g,\gamma},\varphi+a\psi_{g,\gamma}+\log|g|^2} + (a-b\min\{n,p-1\})\|v\|^2_{\omega_{g,\gamma},\varphi+a\psi_{g,\gamma}}
\end{align*}
holds for any constant $b>1$ and any $u \in \Ker (\dbar) \cap \Dom(T_1^*),
v \in \Dom(T_2^*)\cap(\Ker(T_2^*))^\bot$.
\end{lem}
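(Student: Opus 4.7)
The plan is to start from the already-derived identity \eqref{eq:ineq-functional-1}, which expands $\|T_1^* u + T_2^* v\|^2_{\omega_{g,\gamma},\varphi+a\psi_{g,\gamma}}$ as the sum of three terms: $\|u\|^2_{\omega_{g,\gamma},\varphi+a\psi_{g,\gamma}+\log|g|^2}$, $\|T_2^* v\|^2_{\omega_{g,\gamma},\varphi+a\psi_{g,\gamma}}$, and the cross term $2\text{Re}\,\iinner{T_2 T_1^* u, v}_{\omega_{g,\gamma},\varphi+a\psi_{g,\gamma}}$. The first term is already in the target form; I will bound the second from below via the Bochner-type Lemma \ref{lem:weak-basic-inequality'}, and estimate the (possibly negative) cross term from below via Skoda's basic inequality (Lemma \ref{for-skoda'}) combined with a parametrized AM-GM.

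For $\|T_2^* v\|^2$, the hypothesis $v \in (\Ker T_2^*)^\perp = \overline{\Range(T_2)}$ forces each component $v_j$ to be a limit of $\dbar$-exact forms, hence $\dbar$-closed; together with $v \in \Dom T_2^*$ this puts each $v_j \in \Dom(\dbar^*_{\varphi+a\psi_{g,\gamma}}) \cap \Ker(\dbar)$. Applying Lemma \ref{lem:weak-basic-inequality'} componentwise and summing over $j$ yields $\|T_2^* v\|^2_{\omega_{g,\gamma},\varphi+a\psi_{g,\gamma}} \ge a\|v\|^2_{\omega_{g,\gamma},\varphi+a\psi_{g,\gamma}}$.

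For the cross term, since $u \in \Ker\dbar$ the Leibniz rule applied to $T_1^* u = (\bar g_j e^{-\log|g|^2} u)_j$ gives $(T_2 T_1^* u)_j = A_j \wedge u$ with $A_j := \dbar(\bar g_j e^{-\log|g|^2})$. The key pointwise reduction is that wherever $u \ne 0$, each $(n,1)$-form $v_j$ may be written uniquely as $v_j = \alpha_j \wedge u$ for a $(0,1)$-form $\alpha_j$; the identity $\langle \beta \wedge u, \beta' \wedge u\rangle_{\omega_{g,\gamma}} = |u|^2_{\omega_{g,\gamma}}\langle \beta, \beta'\rangle_{\omega_{g,\gamma}}$, a direct check in a unitary frame, then gives $|\alpha_j|^2 = |v_j|^2/|u|^2$ and reduces the pointwise integrand of the cross term to $|u|^2 \sum_j \langle A_j, \alpha_j\rangle_{\omega_{g,\gamma}}$. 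Feeding $(\alpha_1,\ldots,\alpha_p)$ into Lemma \ref{for-skoda'} gives $|g|^2 |\sum_j \langle A_j, \alpha_j\rangle|^2 \le \min\{n,p-1\}\,|v|^2_{\omega_{g,\gamma}}/|u|^2_{\omega_{g,\gamma}}$; taking square roots and multiplying by $|u|^2$ yields the pointwise bound $|u|^2 |\sum_j \langle A_j, \alpha_j\rangle| \le (\min\{n,p-1\})^{1/2}(|u|/|g|)\,|v|_{\omega_{g,\gamma}}$. Applying $2xy \le x^2/b + by^2$ with $x = |u|/|g|$ and $y = (\min\{n,p-1\})^{1/2}|v|$, using $1/|g|^2 = e^{-\log|g|^2}$, and integrating against $e^{-\varphi-a\psi_{g,\gamma}} dV_{\omega_{g,\gamma}}$, produces $|2\text{Re}\,\iinner{T_2 T_1^* u, v}| \le (1/b)\|u\|^2_{\omega_{g,\gamma},\varphi+a\psi_{g,\gamma}+\log|g|^2} + b\min\{n,p-1\}\|v\|^2_{\omega_{g,\gamma},\varphi+a\psi_{g,\gamma}}$. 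Substituting both estimates into the expansion of $\|T_1^* u + T_2^* v\|^2$ and using $2\text{Re}\,X \ge -|2\text{Re}\,X|$ yields the claim.

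The main technical point is the pointwise reduction from $(n,1)$-forms to $(0,1)$-forms via wedging with $u$; since $u$ is holomorphic, either $u \equiv 0$ (in which case $T_1^* u = 0$ and the assertion reduces to $\|T_2^* v\|^2 \ge (a - b\min\{n,p-1\})\|v\|^2$, immediate from Lemma \ref{lem:weak-basic-inequality'}) or $\{u = 0\}$ has Lebesgue measure zero, so the pointwise a.e.\ estimate integrates without loss. A minor check is that Lemma \ref{for-skoda'} is a pointwise inequality on $D'$ where $g \ne 0$, so applying it to the pointwise-defined $\alpha_j$ is legitimate.
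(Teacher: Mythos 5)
Your proof is correct and follows essentially the same route as the paper: expand via \eqref{eq:ineq-functional-1}, bound $\|T_2^*v\|^2\ge a\|v\|^2$ componentwise by Lemma \ref{lem:weak-basic-inequality'}, and control the cross term by combining Lemma \ref{for-skoda'} with the parametrized Cauchy--Schwarz inequality $2xy\le x^2/b+by^2$. The only cosmetic differences are that you factor $v_j=\alpha_j\wedge u$ pointwise to reduce to $(0,1)$-forms where the paper carries out the same computation directly in coordinates (with the $\det A^{-1}$ factors), and that you are slightly more explicit about the degenerate case $u\equiv 0$ and about why $v\in(\Ker T_2^*)^{\perp}$ forces each $v_j$ to be $\dbar$-closed.
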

\begin{proof}
In local coordinates,
we write $u=u_0dz$, $v=(v_1,\cdots,v_p)$, where $dz:=dz_1\wedge\cdots\wedge dz_n$ and all $v_j=\sum_{k=1}^{n}v_{jk}dz\wedge d\bar z_k$ are $(n,1)$-forms, and $$\omega_{g,\gamma}=i\sum_{k,l=1}^{n}\frac{\partial^2\psi_{g,\gamma}}{\partial z_k\partial\bar z_l}dz_k\wedge d\bar z_l.$$ We denote $A:=\left(\frac{\partial^2\psi_{g,\gamma}}{\partial z_k\partial\bar z_l}\right)_{kl}$ and $A^{-1}=(A^{lk})$,
then
\begin{equation*}
  \begin{aligned}
  & \left|2\sum_{j=1}^{p}\text{Re}\left \langle u\dbar\left(\bar g_je^{-\log|g|^2}\right), v_j\right\rangle_{\omega_{g,\gamma}}\right|\\
=& \left|2\sum_{j=1}^{p}\text{Re}\left  \langle u_0\sum_{k=1}^n\frac{\partial(\bar g_je^{-\log|g|^2})}{\partial\bar z_k}dz\wedge d\bar z_k, \sum_{l=1}^nv_{jl}dz\wedge d\bar z_l\right\rangle_{\omega_{g,\gamma}}\right|\\
   =& \left|2\sum_{j=1}^{p}\text{Re}\left(  |dz|^2_{\omega_{g,\gamma}}\cdot u_0\left(\sum_{k,l=1}^nA^{kl}\frac{\partial(\bar g_je^{-\log|g|^2})}{\partial\bar z_k}\bar v_{jl}\right)\right)\right|\\
    \le & \frac{1}{b|g|^2}|dz|^2_{\omega_{g,\gamma}}\cdot |u_0|^2+b|g|^2 |dz|^2_{\omega_{g,\gamma}}\cdot\left| \sum_{j=1}^{p}\sum_{k,l=1}^n A^{kl}\frac{\partial(\bar g_je^{-\log|g|^2})}{\partial\bar z_k}\bar v_{jl}\right|^2\\ =&  \frac{1}{b|g|^2}|u|^2_{\omega_{g,\gamma}}+ b|g|^2|dz|^2_{\omega_{g,\gamma}}\cdot\left| \sum_{j=1}^{p}\langle\bar\partial\left( g_je^{-\log|g|^2}\right), \sum_{l=1}^{n}v_{jl}d\bar z_l\rangle_{\omega_{g,\gamma}}\right|^2
  \\
    \le & \frac{1}{b|g|^2}|u|^2_{\omega_{g,\gamma}}+ b\min\{n,p-1\}|dz|^2_{\omega_{g,\gamma}}\cdot \sum_{j=1}^{p}\left|\sum_{l=1}^{n}v_{jl}d\bar z_l\right|^2_{\omega_{g,\gamma}}\\
    =& \frac{1}{b|g|^2}|u|^2_{\omega_{g,\gamma}}+ b\min\{n,p-1\} \sum_{j=1}^{p}\left|\sum_{l=1}^{n}v_{jl}dz\wedge d\bar z_l\right|^2_{\omega_{g,\gamma}}\\
    =& \frac{1}{b|g|^2}|u|^2_{\omega_{g,\gamma}}+ b \min\{n,p-1\} |v|^2_{\omega_{g,\gamma}},
  \end{aligned}
  \end{equation*}
  where the first inequality is due to the Cauchy-Schwarz inequality and the second inequality is due to Lemma \ref{for-skoda'}.

  Since $u$ is holomorphic, we get that
  \begin{align*}
    &\left|2\text{Re}\iinner{ T_2(T_1^* u), v}_{\omega_{g,\gamma},\varphi+a\psi_{g,\gamma}}\right|\\=& \left|2\sum_{j=1}^{p}\text{Re}\iinner{ u\dbar\left(\bar g_je^{-\log|g|^2}\right), v_j}_{\omega_{g,\gamma},\varphi+a\psi_{g,\gamma}}\right|\\
    =& \left|2\sum_{j=1}^{p}\text{Re}\int_{D'}\langle u\dbar\left(\bar g_je^{-\log|g|^2}\right), v_j\rangle_{\omega_{g,\gamma}}e^{-\varphi-a\psi_{g,\gamma}}dV_{\omega_{g,\gamma}}\right|\\
    \le & \int_{D'}\frac{1}{b|g|^2}|u|^2_{\omega_{g,\gamma}}e^{-\varphi-a\psi_{g,\gamma}}
    dV_{\omega_{g,\gamma}}+\int_{D'} b \min\{n,p-1\} |v|^2_{\omega_{g,\gamma}}
    e^{-\varphi-a\psi_{g,\gamma}}dV_{\omega_{g,\gamma}}\\
    =& \frac{1}{b}\|u\|^2_{\omega_{g,\gamma},\varphi+a\psi_{g,\gamma}+\log|g|^2}+b\min\{n,p-1\}
    \|v\|^2_{\omega_{g,\gamma},\varphi+a\psi_{g,\gamma}}.
  \end{align*}

By Lemma \ref{lem:weak-basic-inequality'},
we have
$$
 \| T^*_2 v \|^2 _{\omega_{g,\gamma},\varphi+a\psi_{g,\gamma}} \ge  a \|v\|^2_{\omega_{g,\gamma},\varphi+a\psi_{g,\gamma}}.
$$
Therefore,
\begin{eqnarray*}
  & &  \| T_1^* u+ T_2^ * v\|_{\omega_{g,\gamma},\varphi+a\psi_{g,\gamma}}^2\\
  &\ge & \| T^*_2 v \|^2 _{\omega_{g,\gamma},\varphi+a\psi_{g,\gamma}} +(1-\frac{1}{b}) \|u\|^2_{\omega_{g,\gamma},\varphi+a\psi_{g,\gamma}+\log|g|^2} -b\min\{n,p-1\}\|v\|^2_{\omega_{g,\gamma},\varphi+a\psi_{g,\gamma}} \\
  &\ge &(1-\frac{1}{b}) \|u\|^2_{\omega_{g,\gamma},\varphi+a\psi_{g,\gamma}+\log|g|^2} +(a-b\min\{n,p-1\})\|v\|^2_{\omega_{g,\gamma},\varphi+a\psi_{g,\gamma}}.
\end{eqnarray*}

\end{proof}

\section{Skoda's $L^2$ division theorem for $L^2$-optimal pairs}\label{sec:prove-main}

Firstly, we give the proof of Skoda's $L^2$ division theorem for $L^2$-optimal pairs.

\begin{thm}[Theorem \ref{thm:skoda-division'}]
Let $D$ be a domain in $\mc^n$, $\varphi$ an upper semi-continuous function on $D$
  and $g\in\calO(D)^{\oplus p}$.
	Assume that $(D,\varphi)$ is $L^2$-optimal. Set $\varepsilon>0$ and $m=\min\{n,p-1\}$. Then for any holomorphic  $(n,0)$-form $f$  with $$\int_Di^{n^2} f\wedge\bar fe^{-\varphi-(1+m+\varepsilon)\log|g|^2}<+\infty,$$ there exist holomorphic  $(n,0)$-forms $(h_1,\cdots,h_p)$ on $D$
such that
$\sum_{j=1}^{p} h_j g_j=f$
and
\begin{equation*}
  \sum_{j=1}^{p}\int_Di^{n^2}h_j\wedge\bar h_je^{-\varphi-(m+\varepsilon)\log|g|^{2}}
  \le(1+\frac{m}{\varepsilon})\int_Di^{n^2}f\wedge\bar fe^{-\varphi-(1+m+\varepsilon)\log|g|^2}.
\end{equation*}
\end{thm}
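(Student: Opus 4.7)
The plan is to apply Skoda's functional Lemma~\ref{lem:functional-lemma-Skoda} to the operator triple $(T_1, T_2)$ constructed in \S\ref{sec:notations} on the open set $D' := D \setminus \bigl(\supp(\calO_D/\calI(\varphi))\cup\{g=0\}\bigr)$, equipped with the K\"ahler metric $\omega_{g,\gamma}$ and weight $\varphi + a\psi_{g,\gamma}$. The basic inequality of Lemma~\ref{lem:skoda-basic-inequality'} supplies the required a priori bound; I then let $\gamma\to 0^+$ and extend the resulting solutions across the analytic set $D\setminus D'$ to obtain holomorphic $(n,0)$-forms on all of $D$.

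For the parameter choice, assume $m\ge 1$ (the case $m=0$, i.e.\ $p=1$, is immediate: the hypothesis forces $g_1\mid f$ and the identity $h_1 = f/g_1$ already satisfies the estimate). Set $a = m+\varepsilon$ and $b = 1+\varepsilon/m$, so that $a-bm = 0$ and $1-1/b = \varepsilon/(m+\varepsilon)$. Dropping the nonnegative $v$-term in Lemma~\ref{lem:skoda-basic-inequality'} yields
\begin{equation*}
\|T_1^* u + T_2^* v\|^2_{\omega_{g,\gamma},\varphi+a\psi_{g,\gamma}} \ge \frac{\varepsilon}{m+\varepsilon}\,\|u\|^2_{\omega_{g,\gamma},\varphi+a\psi_{g,\gamma}+\log|g|^2}
\end{equation*}
for all $u \in F := L^2_{(n,0)}(D';\omega_{g,\gamma},\varphi+a\psi_{g,\gamma}+\log|g|^2)\cap\Ker\dbar$ and admissible $v$. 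Since $e^{-a\gamma|z|^2}\le 1$, the hypothesis on $f$ places $f$ in $F$ with norm uniformly bounded in $\gamma$. Lemma~\ref{lem:functional-lemma-Skoda} then produces $h^{(\gamma)}\in \Ker T_2$ with $\sum_j g_j h_j^{(\gamma)} = f$ and
\begin{equation*}
\|h^{(\gamma)}\|^2_{\omega_{g,\gamma},\varphi+a\psi_{g,\gamma}} \le \Bigl(1+\frac{m}{\varepsilon}\Bigr)\|f\|^2_{\omega_{g,\gamma},\varphi+a\psi_{g,\gamma}+\log|g|^2}.
\end{equation*}
Because all the forms involved are of type $(n,0)$, the pointwise norms times $dV_{\omega_{g,\gamma}}$ combine into the intrinsic $i^{n^2}$-expressions, so this is exactly the claimed bound on $D'$ with the extra factor $e^{-a\gamma|z|^2}$ on both sides.

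To remove $\gamma$ and extend to $D$, fix a sequence $\gamma_k\to 0^+$ and apply Banach--Alaoglu to extract a weak $L^2_\loc(D')$-limit $h=(h_1,\dots,h_p)$ of $(h^{(\gamma_k)})$. Weak closedness of $\dbar$ combined with holomorphicity of each $h_j^{(\gamma_k)}$ forces $h_j$ to be holomorphic on $D'$, and weak convergence gives $\sum_j g_j h_j = f$ on $D'$. Fatou's lemma on the left and monotone convergence on the right (as $e^{-a\gamma_k|z|^2}\uparrow 1$) yield the desired integral bound on $D'$. The set $D\setminus D' = \supp(\calO_D/\calI(\varphi))\cup\{g=0\}$ is an analytic subset of $D$ by the coherence Lemma~\ref{lem coherence}; upper semi-continuity of $\varphi$ (so $\varphi$ is bounded above on compacts) together with local boundedness of $g$ makes the weight $e^{-\varphi}|g|^{-2(m+\varepsilon)}$ locally bounded below by a positive constant on each compact $K\subset D$. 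Hence each $h_j$ lies in $L^2_\loc(D)$, so Lemma~\ref{lem L2negligible} extends $\dbar h_j=0$ across $D\setminus D'$, giving a holomorphic extension of $h_j$ to $D$. The identity $\sum_j g_j h_j = f$ and the integral bound then transfer from $D'$ to $D$ because $D\setminus D'$ has Lebesgue measure zero.

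The main difficulty I anticipate is precisely this coordinated limit-and-extension step: selecting $a, b$ so that the a priori estimate survives the passage $\gamma\to 0^+$, and exploiting the pole $|g|^{-2(m+\varepsilon)}$ in the weight together with the coherence of $\calI(\varphi)$ to recover holomorphicity on the full domain $D$. Once the choices $a = m+\varepsilon$, $b = 1 + \varepsilon/m$ are fixed, the remaining ingredients -- Banach--Alaoglu, Fatou, monotone convergence, and the $L^2$-negligibility of analytic sets -- are routine.
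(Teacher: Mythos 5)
Your proposal is correct and follows the paper's own proof essentially step for step: the same reduction of the $p=1$ case via $L^2$-negligibility, the same parameter choices $a=m+\varepsilon$ and $b=1+\varepsilon/m$ in Lemma \ref{lem:skoda-basic-inequality'}, the same application of Skoda's functional lemma on $D'$, the same $\gamma\to 0^+$ weak-limit argument via Banach--Alaoglu and Fatou, and the same extension across $D\setminus D'$ by Lemma \ref{lem L2negligible}. The only (welcome) addition is that you make explicit why the $h_j$ are locally $L^2$ near the analytic set, a detail the paper leaves implicit.
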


\begin{proof}

If $m=0$, i.e. $p=1$, we take $h=\frac{f}{g}$, then $h$ is holomorphic on $D\setminus\{g=0\}$ and
\begin{equation*}
 \int_{D}i^{n^2}h\wedge\bar he^{-\varphi-\varepsilon\log|g|^{2}}=
  \int_{D}i^{n^2}f\wedge\bar fe^{-\varphi-(1+\varepsilon)\log|g|^2}<+\infty.
\end{equation*}
Then by Lemma \ref{lem L2negligible},  $h$ is holomorphic on $D$.

If $m\ge 1$,
taking $a=m+\varepsilon=bm$ in Lemma $\ref{lem:skoda-basic-inequality'}$, together with Lemma \ref{lem:weak-basic-inequality'}, we obtain
\begin{align*}
    \| T_1^* u+ T_2^ * v\|_{\omega_{g,\gamma},\varphi+a\psi_{g,\gamma}}^2
  \ge \frac{\varepsilon}{m+\varepsilon} \|u\|^2_{\omega_{g,\gamma},\varphi+a\psi_{g,\gamma}+\log|g|^2}
\end{align*}
for any $u \in \Ker (\dbar) \cap \Dom(T_1^*),
v \in \Dom(T_2^*)\cap(\Ker(T_2^*))^\bot$.
Now for any holomorphic  $(n,0)$-form $f$  with $$\int_Di^{n^2}f\wedge\bar fe^{-\varphi-(1+m+\varepsilon)\log|g|^2}<+\infty,$$
we have
\begin{align*}
  \int_{D'}|f|^2_{\omega_{g,\gamma}}e^{-\varphi-(m+\varepsilon)\psi_{g,\gamma}-\log|g|^2}
  dV_{\omega_{g,\gamma}}
  =&\int_{D'}i^{n^2}f\wedge\bar fe^{-\varphi-(1+m+\varepsilon)\log|g|^2-\gamma(m+\varepsilon)|z|^2}\\
  \le&\int_{D'}i^{n^2}f\wedge\bar fe^{-\varphi-(1+m+\varepsilon)\log|g|^2}<+\infty.
\end{align*}
Then by Lemma \ref{lem:functional-lemma-Skoda}, there exist holomorphic  $(n,0)$-forms $h^{(\gamma)}=(h_1^{(\gamma)},\cdots,h_p^{(\gamma)})$ on $D'$
such that
$\sum_{j=1}^{p} h_j^{(\gamma)} g_j=f$ on $D'$
and
\begin{equation*}
\int_{D'}|h^{(\gamma)}|^2_{\omega_{g,\gamma}}
e^{-\varphi-(m+\varepsilon)\psi_{g,\gamma}}dV_{\omega_{g,\gamma}}
  \le(1+\frac{m}{\varepsilon})
  \int_{D'}|f|^2_{\omega_{g,\gamma}}e^{-\varphi-(m+\varepsilon)\psi_{g,\gamma}-\log|g|^2}
  dV_{\omega_{g,\gamma}}.
\end{equation*}
Since all $h^{(\gamma)}_j$ are $(n,0)$-forms, hence $|h^{(\gamma)}|^2_{\omega}dV_{\omega}$ is independent of the choice of the metric $\omega$ on the manifold. In addition, $\psi_{g,\gamma}$ decreasingly converges to $\log|g|^2$ as $\gamma\to0$,
  then by the Banach-Alaoglu-Bourbaki theorem, we can take a sequence $\gamma_j\to0$ such that $h^{(\gamma_j)}$ is weakly $L^2$ convergent to a limit $h=(h_1,\cdots,h_p)$ on $D'$. Then $h$ is also holomorphic on $D'$ and $\sum_{j=1}^{p} h_j g_j=f$ on $D'$. And by Fatou's lemma and the dominated convergence theorem, we have
\begin{equation*}
  \sum_{j=1}^{p}\int_{D'}i^{n^2}h_j\wedge\bar h_je^{-\varphi-(m+\varepsilon)\log|g|^{2}}
  \le(1+\frac{m}{\varepsilon})\int_Di^{n^2}f\wedge\bar fe^{-\varphi-(1+m+\varepsilon)\log|g|^2}.
\end{equation*}
Finally, by Lemma \ref{lem L2negligible}, all $h_j$ can be extended to $D$.
\end{proof}

Furthermore, we want to generalize the above result on Stein manifolds.
To be precise, let $X$ be a Stein manifold, $D$  a domain in $X$ and $L$ a holomorphic line bundle endowed with a possibly singular metric $e^{-\varphi}$ over $D$.

\begin{defn}[\cite{Liu-Zhang}]\label{def stein l2 optimal}
 We say that  the pair $(D,L,\varphi)$ is \emph{$L^2$-optimal}
  if for any smooth strictly plurisubharmonic function $\phi$ and
  any K\"{a}hler metric $\omega$ on $D$,
  the equation $\dbar u=f$ can be solved on $D$ for any $\dbar$-closed $L$-valued $(n,1)$-form
  $f \in L^2_{n,1}(D,\phi)$
  with the estimate:
  $$
  \int_D|u|^2_{\omega,\varphi} e^{-\phi} dV_\omega
  \le 
  \int_D \inner{B_\phi^{-1}f,f}_{\omega,\varphi} e^{-\phi} dV_{\omega},
  $$
  provided the right-hand side is finite, where $B_\phi:=[i\ddbar \phi, \Lambda_{\omega}]$.

  In particular, we call $D$ is  $\emph{$L^2$-optimal}$ if the pair $(D,D\times\mc,0)$ is \emph{$L^2$-optimal}.
\end{defn}

It is well known that $X$ admits a smooth strictly plurisubharmonic exhaustion function $\eta$.  By replacing $\gamma|z|^2$ with  $\gamma\eta$ and calculating in normal coordinates,  we can obtain the Skoda $L^2$ division theorem for $L^2$-optimal pairs on Stein manifolds similarly but omit the proof.

\begin{thm}[Theorem \ref{thm:skoda-division-line-bundle}]
Let $D$ be a domain in a Stein manifold $(X,\omega)$, $L$ be a holomorphic line bundle endowed with a possibly singular metric $e^{-\varphi}$ over $D$ and $g=(g_{1},\cdots, g_{p}) \in \calO(D)^{\oplus p}$. Assume that the local weight function $\varphi$ is upper semi-continuous on $D$  and the pair $(D,L,\varphi)$ is $L^2$-optimal.
Set $\varepsilon>0 $ and $ m=\min \{n,p-1 \}$.
Then for any $L$-valued $(n,0)$-form $f$ satisfying
$$
\int_D |f|^2_{\omega,\varphi} e^{-(1+m+\varepsilon)\log|g|^2} dV_{\omega}<+\infty,
$$
there exist holomorphic $L$-valued $(n,0)$-forms $(h_1,\cdots,h_p)$ on $D$
such that
$$
\sum_{j=1}^{p} h_j g_j=f
$$
and
\begin{equation*}
  \sum_{j=1}^{p}\int_D |h_j|^2_{\omega,\varphi} e^{-(m+\varepsilon)\log|g|^{2}} dV_{\omega}
  \le(1+\frac{m}{\varepsilon})\int_D |f|^2_{\omega,\varphi}e^{-(1+m+\varepsilon)\log|g|^2}dV_{\omega}.
\end{equation*}
\end{thm}

\section{Applications}
\label{sec:application}
In this section, we present some applications of the Skoda $L^2$ division theorem for $L^2$-optimal pairs.
Firstly, we can remove the assumption of boundedness for the domain when characterizing pseudoconvex domains via $L^2$-optimal conditions.

\begin{thm}[Corollary \ref{cor1}]
  Let $D\subset\mc^n$ be a domain with a null thin complement. Then  $D$ is  $L^2$-optimal if and only if $D$ is  a domain of holomorphy.
\end{thm}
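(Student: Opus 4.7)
The forward direction is classical: if $D$ is a domain of holomorphy, then $D$ is pseudoconvex by Oka's theorem, and H\"ormander's $L^2$-existence theorem furnishes solutions of $\bar\partial$ satisfying the estimate \eqref{eq:a1} for every smooth strictly plurisubharmonic $\phi$ and K\"ahler $\omega$. Hence $(D,0)$ is $L^2$-optimal.

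For the converse, the plan is to reproduce the argument of \cite{Liu-Zhang} for the bounded case, replacing the restricted $L^2$-division used there by Theorem \ref{thm:skoda-division'}, whose applicability to unbounded $D$ is precisely the new feature. By Cartan--Thullen, it suffices to show that every $z_0 \in \partial D$ is essential: some holomorphic function on $D$ fails to extend across $z_0$. Fix $z_0 \in \partial D$ and set $g_j(z) := z_j - z_0^j$ for $j = 1, \ldots, n$, so that $g \in \calO(D)^{\oplus n}$ has no common zero on $D$, $|g|^2 = |z-z_0|^2$, and in the notation of Theorem \ref{thm:skoda-division'} one has $p = n$ and $m = n-1$.

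Apply Theorem \ref{thm:skoda-division'} with $\varphi = 0$ to a carefully chosen holomorphic $(n,0)$-form $f$ (or to a sequence $f_k$, whose resulting solutions are then summed in normalized fashion) to produce holomorphic $(n,0)$-forms $h_1, \ldots, h_n$ on $D$ satisfying $\sum_j h_j g_j = f$ together with the weighted $L^2$ bound. The choice of $f$ is arranged so that the resulting $h_j$'s --- or a series built from solutions associated with a sequence $w_k \in D$ approaching $z_0$ --- furnish a holomorphic function on $D$ whose profile along $w_k \to z_0$ is incompatible with holomorphic continuation across $z_0$. The hypothesis $\mathring{\overline{D}} = D$ enters at this last step: it guarantees that the complement $D^c$ accumulates at every point of $\partial D$, so any blow-up (or similar obstruction) encoded in the $h_j$'s inside $D$ near $z_0$ genuinely rules out a holomorphic extension to a neighborhood of $z_0$, rather than being a removable artifact as would happen for e.g.\ a punctured Hartogs-type domain.

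The main technical difficulty is the joint arrangement of the two requirements on $f$: (i) the integrability hypothesis $\int_D i^{n^2} f \wedge \bar f \, |g|^{-2(n+\varepsilon)} < \infty$ of Theorem \ref{thm:skoda-division'}, which near $z_0$ forces $f$ to vanish there to order at least $n$ and which at infinity --- the precise improvement over \cite{Liu-Zhang} --- is controlled by the polynomial decay of $|g|^{-2(n+\varepsilon)}$; and (ii) the requirement that the $h_j$'s extracted from Skoda's division actually fail to extend across $z_0$. Once this joint arrangement is carried out uniformly over $z_0 \in \partial D$, Cartan--Thullen yields that $D$ is a domain of holomorphy.
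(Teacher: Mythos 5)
Your forward direction matches the paper. The converse, however, is a plan rather than a proof, and the plan is aimed at the wrong target: the central object --- the ``carefully chosen'' $f$ (or sequence $f_k$) whose Skoda solutions $h_j$ obstruct extension across a boundary point $z_0\in\partial D$ --- is never exhibited, and the tension you yourself flag (the integrability hypothesis $\int_D i^{n^2}f\wedge\bar f\,|z-z_0|^{-2(n+\varepsilon)}<\infty$ forces $f$ to be small near $z_0$, while non-extendability of the $h_j$ requires them to be singular there) is left unresolved. As written, the argument cannot be checked and there is no indication of how to close it.

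The paper takes a different and much simpler route that sidesteps this difficulty entirely. It argues by contradiction: if $D$ is not a domain of holomorphy, there are connected open sets $U_1\subset U_2\cap D$ with $U_2\not\subset D$ such that every $f\in\calO(D)$ extends to $U_2$. The hypothesis $\mathring{\overline{D}}=D$ is then used \emph{at this point} --- not in the way you describe --- to upgrade $U_2\not\subset D$ to $U_2\not\subset\overline{D}$, producing a point $z^0\in U_2\setminus\overline{D}$ strictly outside the closure. One then applies Theorem \ref{thm:skoda-division'} with $g(z)=z-z^0$ and $f=dz$, with weight $\varphi=(n+\varepsilon)\log(1+|z|^2)$. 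Because $d(z^0,\overline{D})>0$, the factor $|z-z^0|^{-2(n+\varepsilon)}$ is bounded on $D$ near $z^0$ and the integrability hypothesis holds trivially (the logarithmic weight handles infinity); there is no delicate balancing act. The division yields $\sum_j(z_j-z_j^0)h_j/dz\equiv 1$ on $D$, the extensions $H_j\in\calO(U_2)$ satisfy the same identity on $U_2$ by the identity theorem, and evaluating at $z=z^0\in U_2$ gives $0=1$, a contradiction. Note also that your reading of the role of $\mathring{\overline{D}}=D$ (``the complement accumulates at every boundary point'') is not what is needed; what it actually rules out is the situation where $U_2\setminus D$ is contained in the thin set $\partial D$ (e.g.\ a ball minus a hypersurface), in which case no $z^0$ outside $\overline{D}$ could be found and the contradiction would be unavailable. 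If you want to salvage your boundary-point strategy you would have to supply the missing construction; the paper's proof shows this is unnecessary.
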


\begin{proof}If $D$ is  a domain of holomorphy, then $D$ is pseudoconvex. It follows from H\"omander's $L^2$ existence (\cite[Chapter VIII-(6.1)]{Demailly}) that $D$ is $L^2$-optimal.

Conversely,
  suppose that  $D$ is not a domain of holomorphy. Then by the definition,  there is a connected open set $U_2$ with $U_2 \cap D\neq\emptyset$ and $U_2 \not\subset D$ and an open subset $U_1\subset U_2\cap D$ such that for every holomorphic function $f\in \calO(D)$, there is an $F\in \calO(U_2)$ satisfying $F=f$ on $U_1$.

  Since $D$ is $L^2$-optimal, $(D,(n+\varepsilon)\log(1+|z|^2))$ is also $L^2$-optimal. Since  $\mathring{\overline {D}}=D$, we have $U_2 \not\subset \overline {D}$. Thus for some $z^0\in U_2 \backslash \overline {D}$, in Theorem \ref{thm:skoda-division'}, we can take $g(z)=z-z^0$.  Noticing that $dz\in L^2_{(n,0)}(D,(n+\varepsilon)(\log(1+|z|^2)+\log|z-z^0|^2))$, we obtain holomorphic $(n,0)$-forms  $h=(h_1,\cdots,h_n)$
with
$\sum_{j=1}^{n}  (z_j-z_j^0)h_j(z)=dz$ on $D$. Notice that all $\frac{h_j}{dz}$ are holomorphic functions on $D$,
 then there exist $H_j\in\calO(U_2), j=1,\cdots,n,$ such that $$H_j\big|_{U_1}=\frac{h_j}{dz}\big|_{U_1}.$$
Then on $U_1$ we have
$$
\sum_{j=1}^{n} H_j(z)(z_j-z^0_j) \equiv1.
$$
Hence by the uniqueness theorem for holomorphic functions, we obtain $\sum_{j=1}^{n} H_j(z)(z_j-z^0_j) \equiv1$
on $U_2$.
In particular, $\sum_{j=1}^{n} H_j(z)(z_j-z^0_j) =1$ at $z^0\in U_2$, which is a contradiction.

\end{proof}

 Moreover, using the distance function, we provide a new characterization of domains of holomorphy via the $L^2$-optimal condition.

\begin{thm}[Corollary \ref{cor2}]
  Let $D$ be a domain in $\mc^n$. Then $D$ is a domain of holomorphy if and only if $(D,-\log d(z,D^c))$  is $L^2$-optimal.
\end{thm}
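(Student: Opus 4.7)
For the direction $(\Leftarrow)$: if $D$ is a domain of holomorphy then $D$ is pseudoconvex, hence $\varphi:=-\log d(z,D^c)$ is plurisubharmonic on $D$. Given any smooth strictly plurisubharmonic function $\phi$ and K\"ahler metric $\omega$ on $D$, applying H\"ormander's $L^2$-existence theorem with weight $\phi+\varphi$ yields a solution $u$ to $\dbar u=f$ with $\int_D|u|^2_\omega e^{-\phi-\varphi}\,dV_\omega\leq\int_D\inner{B_{\phi+\varphi}^{-1}f,f}_\omega e^{-\phi-\varphi}\,dV_\omega$. Since $\varphi$ is psh, the monotonicity $B_{\phi+\varphi}\geq B_\phi$ upgrades this to the $L^2$-optimal inequality involving $B_\phi^{-1}$, as required.

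For the direction $(\Rightarrow)$: I argue by contradiction, mirroring the strategy of Corollary \ref{cor1}. Suppose $D$ is not a domain of holomorphy; then there exist a connected open $U_2$ with $U_2\cap D\neq\emptyset$, $U_2\not\subset D$, and an open $U_1\subset U_2\cap D$ such that every $f\in\calO(D)$ extends to some $F\in\calO(U_2)$ with $F|_{U_1}=f|_{U_1}$. Since $U_2$ is connected and meets both $D$ and $D^c$, one has $U_2\cap\partial D\neq\emptyset$; fix $z^0\in U_2\cap\partial D\subset D^c$. The crucial auxiliary observation is that for every $N\geq 0$ the pair $(D,\varphi+N\log(1+|z|^2))$ is again $L^2$-optimal: applying the $L^2$-optimality of $(D,\varphi)$ with the smooth strictly psh weight $\phi+N\log(1+|z|^2)$ in place of $\phi$ and using $B_{\phi+N\log(1+|z|^2)}^{-1}\leq B_\phi^{-1}$ (because $N\log(1+|z|^2)$ is smooth psh) produces exactly the required inequality. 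Then I apply Theorem \ref{thm:skoda-division'} with $N=1$, $g(z)=(z_1-z_1^0,\ldots,z_n-z_n^0)$ (so $p=n$ and $m=n-1$), $f=dz:=dz_1\wedge\cdots\wedge dz_n$, and $\varepsilon\in(0,1/2)$, to obtain holomorphic $(n,0)$-forms $(h_1,\ldots,h_n)$ on $D$ with $\sum_j(z_j-z_j^0)h_j=dz$. Writing $h_j=\tilde h_j\,dz$ gives $\tilde h_j\in\calO(D)$ with $\sum_j(z_j-z_j^0)\tilde h_j\equiv 1$ on $D$. Extending each $\tilde h_j$ to $H_j\in\calO(U_2)$ by hypothesis and invoking unique continuation yields $\sum_j(z_j-z_j^0)H_j\equiv 1$ on $U_2$, which is absurd at $z=z^0$.

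The main obstacle is verifying the integrability hypothesis of Skoda's theorem, namely
\begin{equation*}
\int_D d(z,D^c)(1+|z|^2)^{-1}|z-z^0|^{-2(n+\varepsilon)}\,dV<+\infty.
\end{equation*}
Near $z^0\in\partial D$, the $1$-Lipschitz bound $d(z,D^c)\leq|z-z^0|$ makes the integrand $\lesssim|z-z^0|^{1-2(n+\varepsilon)}$, which is integrable in real dimension $2n$ precisely when $\varepsilon<1/2$. At infinity---the delicate case absent from Corollary \ref{cor1} under its null-thin-complement hypothesis---the extra factor $(1+|z|^2)^{-1}$ combined with $d(z,D^c)\leq|z-z^0|\lesssim|z|$ bounds the integrand by $|z|^{-1-2n-2\varepsilon}$, which is integrable over all of $\mc^n$. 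Thus the essential new ingredient beyond Corollary \ref{cor1} is the insertion of the auxiliary smooth psh weight $\log(1+|z|^2)$ to tame the behavior at infinity, which is what makes the argument work without any boundedness assumption on $D$.
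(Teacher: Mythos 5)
Your proof is correct and follows essentially the same route as the paper: both directions rest on the same ingredients (H\"ormander's theorem for the psh weight $-\log d(z,D^c)$; stability of $L^2$-optimality under adding a smooth psh weight; and Skoda's division with $g=z-z^0$ at a boundary point $z^0$, with integrability rescued near $z^0$ by $d(z,D^c)\le |z-z^0|$ and at infinity by the $\log(1+|z|^2)$ factor). The only cosmetic differences are that the paper concludes directly via Cartan--Thullen rather than by contradiction, and takes the exponent $n+\varepsilon$ instead of $1$ on the auxiliary weight $\log(1+|z|^2)$ — both choices make the integral converge for $\varepsilon<1/2$.
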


\begin{proof}
If $D$ is a domain of holomorphy, then $-\log d(z,D^c)$ is plurisubharmonic. Therefore, $(D,-\log d(z,D^c))$  is $L^2$-optimal by H\"ormander's $L^2$ existence.

Conversely, 
given any open connected set $U$ such that $U\cap\partial D\neq\emptyset$, choose $z^0\in U\cap\partial D$. Since $(D,-\log d(z,D^c))$  is $L^2$-optimal and $\log (1+|z|^2)$ is smooth plurisubharmonic, then $(D,-\log d(z,D^c)+(n+\varepsilon)\log (1+|z|^2))$ is also $L^2$-optimal.

         Notice that for $\varepsilon\in(0,\frac{1}{2})$,
         \begin{align*}
           &\int_De^{-(n+\varepsilon)\log (1+|z|^2)-(n+\varepsilon)\log|z-z^0|^2+\log d(z,D^c)}i^{n^2}dz\wedge d\bar z\\
           \le&\int_D \frac{1}{(1+|z|^2)^{n+\varepsilon}}\cdot\frac{1}{|z-z^0|^{2n-1+2\varepsilon}}
         i^{n^2}dz\wedge d\bar z\\
           \le&\int_{\{|z-z^0|\le1\}} \frac{1}{|z-z^0|^{2n-1+2\varepsilon}}i^{n^2}dz\wedge d\bar z
           +\int_{\{|z-z^0|\ge1\}} \frac{1}{(1+|z|^2)^{n+\varepsilon}}i^{n^2}dz\wedge d\bar z
          \\< &+\infty,
         \end{align*}
         where $dz:=dz_1\wedge\cdots\wedge dz_n$.
         In Theorem \ref{thm:skoda-division'}, we take $g=z-z^0\in\calO(D)^{\oplus n}$ and $f=dz$, then there exist holomorphic  $(n,0)$-forms $h=(h_1,\cdots,h_n)$ on $D$
such that
$\sum_{j=1}^{n}  (z_j-z_j^0)h_j(z)=dz$ on $D$.  This shows that at least
one of the functions $\frac{h_j}{dz}$ cannot be analytically continued across the point $z^0\in \partial D$.
By the Cartan-Thullen theorem, $D$ is a domain of holomorphy.
\end{proof}

Finally, we translate Skoda's $L^2$ division theorem for $L^2$-optimal pairs into the following algebraic geometry formulation.

\begin{thm}[Corollary \ref{thm:algebraic}]
  Let $X$ be a Stein manifold and $L$ be a holomorphic line bundle endowed with a possibly singular metric $e^{-\varphi}$ over $X$. Assume that the local weight function $\varphi$ is upper semi-continuous on $X$ and the pair $(X,L,\varphi)$ is $L^2$-optimal. Let $M$ be an effective holomorphic line bundle on $X$ and $g=(g_{1},\cdots, g_{p}) \in (H^0(X,M))^{\oplus p}$.
Let $k  \ge 1$ be an integer and
define
	$$
	\calI_k =\calI (\varphi+(n+k)\log |g|^{2}),
	$$
then we have
\begin{equation*}
	H^0(X,K_X\otimes L\otimes (n+k+1)M\otimes\calI_{k+1})=\sum^{p}_{j=1} g^{j} H^0 (X,K_X\otimes L\otimes (n+k)M\otimes\calI_k).
\end{equation*}
\end{thm}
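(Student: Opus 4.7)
The plan is to deduce this algebraic statement from the analytic Skoda division of Theorem \ref{thm:skoda-division-line-bundle} by a suitable choice of parameters and bundle twist. The inclusion $\supseteq$ is straightforward: for a germ $h$ of $K_X \otimes L \otimes (n+k)M \otimes \calI_k$, the pointwise bound $|g_j|^2 \leq |g|^2$ yields
\begin{equation*}
|g_j h|^2 e^{-\varphi - (n+k+1)\log|g|^2} \leq |h|^2 e^{-\varphi - (n+k)\log|g|^2} \in L^1_{\mathrm{loc}},
\end{equation*}
so $g_j h$ lies stalk-wise in $\calI_{k+1}$. The coherence of $\calI_k, \calI_{k+1}$, ensured by Lemma \ref{lem coherence}, makes the spaces of global sections unambiguously defined.

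For the converse direction, given $f \in H^0(X, K_X \otimes L \otimes (n+k+1)M \otimes \calI_{k+1})$, set $\varepsilon := n + k - m$, which is positive since $k \geq 1$ and $m = \min\{n, p-1\} \leq n$. With this choice, the exponents $1 + m + \varepsilon = n + k + 1$ and $m + \varepsilon = n + k$ appearing in Theorem \ref{thm:skoda-division-line-bundle} coincide with those defining $\calI_{k+1}$ and $\calI_k$. Fix a smooth Hermitian metric $h_M = e^{-\phi_M}$ on $M$, so that $|g|^2_{h_M}$ is a globally defined function on $X$. One then uses the natural bundle-valued extension of Theorem \ref{thm:skoda-division-line-bundle} in which the $g_j$ are sections of $M$ rather than functions and the solutions $h_j$ land in $K_X \otimes L \otimes (n+k)M$; this extension follows by inspecting the proof of Theorem \ref{thm:skoda-division-line-bundle}, since the Bochner-type inequality (Lemma \ref{lem:weak-basic-inequality'}) and Skoda's basic inequality (Lemma \ref{for-skoda'}) are purely local assertions involving only $i\pa\dbar\log|g|^2_{h_M}$, and they carry over verbatim after replacing $(L, \varphi)$ by $(L \otimes (n+k)M, \varphi + (n+k)\phi_M)$.

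The principal obstacle is that the hypothesis $f \in \calI_{k+1}$ yields only local $L^2$-integrability, whereas Theorem \ref{thm:skoda-division-line-bundle} demands a finite global norm on the right-hand side. I would overcome this by exhausting $X$ by the relatively compact Stein sublevel sets $X_\nu := \{\eta < \nu\}$ of a smooth strictly plurisubharmonic exhaustion $\eta$, on each of which the integrability is automatic by compactness. The $L^2$-optimal property on $X_\nu$ is inherited from that on $X$ by extending compactly supported $\dbar$-closed forms on $X_\nu$ by zero, solving on $X$, and restricting; applying the theorem on each $X_\nu$ produces solutions $h_j^{(\nu)}$ with uniform local $L^2$ bounds. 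A weakly convergent subsequence (Banach--Alaoglu--Bourbaki theorem), extracted as in the proofs of Theorems \ref{thm:skoda-division'} and \ref{thm:skoda-division-line-bundle}, yields global solutions $h_j$ on $X$ with $\sum_j g_j h_j = f$, and Fatou's lemma places them in $\calI_k$, completing the proof.
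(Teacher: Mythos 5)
Your choice of exponents is correct (with $\varepsilon=n+k-m>0$ one gets $1+m+\varepsilon=n+k+1$ and $m+\varepsilon=n+k$; the paper instead pads $g$ with zero sections to force $m=n$ and takes $\varepsilon=k$, which is equivalent for the qualitative statement), and your inclusion $\supseteq$ is fine. But the two devices you use to pass from the analytic theorem to the algebraic one both have genuine gaps. First, the bundle twist: the hypothesis is that $(X,L,\varphi)$ is $L^2$-optimal, and this does not transfer to $(X,\,L\otimes(n+k)M,\,\varphi+(n+k)\phi_M)$ for an arbitrary smooth metric $e^{-\phi_M}$ on $M$. The $L^2$-optimal condition only lets you add a globally defined smooth strictly plurisubharmonic function to the weight, whereas $(n+k)\phi_M$ is neither globally defined nor plurisubharmonic in general; so Lemma \ref{bbb} --- the only place the hypothesis enters --- is simply not available for $L\otimes(n+k)M$-valued forms, and the claim that the proof carries over ``verbatim'' is unjustified. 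The paper avoids this by using Steinness to find a hypersurface $A$ off which $L$ and $M$ are simultaneously trivial, writing $f=Fe_L\otimes e_M^{\otimes(n+k+1)}$ and $g_j=G_je_M$ on $X\setminus A$ (the $\phi_M$-factors cancel in the quotients $|f|^2/|g|^{2(n+k+1)}$), running the division there for the scalar data $F,G_j$, and extending the solutions across $A$ by Lemma \ref{lem L2negligible}.

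Second, the exhaustion argument fails on both of its legs. Extending a $\dbar$-closed form on $X_\nu=\{\eta<\nu\}$ by zero does not yield a $\dbar$-closed form on $X$ unless the form already vanishes near $\partial X_\nu$, so $L^2$-optimality does not restrict to $X_\nu$ by that argument. And even granting solvability on each $X_\nu$, your bounds are not uniform: the estimate for $h^{(\nu)}$ on a fixed compact set is controlled by $\int_{X_\nu}|f|^2_{\omega,\varphi}e^{-(n+k+1)\log|g|^2}dV_\omega$, which may tend to $+\infty$ as $\nu\to\infty$ because $f\in\calI_{k+1}$ gives only local integrability; without a uniform bound the Banach--Alaoglu extraction collapses. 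The paper's fix is different and global: choose a single smooth strictly plurisubharmonic exhaustion $\rho$ of $X$ growing fast enough that $\int_X|f|^2e^{-\varphi-\rho}|g|^{-2(n+k+1)}<+\infty$, note that $(X,L,\varphi+\rho)$ is still $L^2$-optimal (a smooth plurisubharmonic summand is absorbed into the auxiliary weight $\phi$ since $B_{\rho+\phi}\geq B_\phi$ on $(n,1)$-forms), and apply the division theorem once on all of $X\setminus A$. Replacing your two devices with these would bring your argument in line with the paper's proof.
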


\begin{proof}
It is clear that the right-hand side is contained in the left-hand side of the equality in the conclusion. For the converse direction, 
let $f\in H^0(X,K_X\otimes L\otimes (n+k+1)M\otimes\calI_{k+1})$.
We may assume $m:=\min\{n,p-1\}=n$ by adding $g^{p+1}=\cdots=g^{n}=0$ if $p \le n$.

When $X$ is Stein, there is a smooth strictly plurisubharmonic exhaustion function $\rho$ on $X$ such that
$$
\int_{X} \frac{|f|^2e^{-\varphi-\rho}}{|g|^{2(n+k+1)}}<+\infty.
$$	
Since $X$ is  Stein, there is a hypersurface $A\subset X$ such that  both $L, F$ are  trivial on $X':=X\setminus A$. Thus we can write
\begin{align*}
   f=& F e_L\otimes e_M^{\otimes(n+k+1)},\\
      g_j=&G_j e_M, \quad \quad 1\le j\le n.
\end{align*}
on $X'$, where $G_j$ are holomorphic functions and 
$F$ is holomorphic $(n,0)$-form, with 
$e_L$ and $e_M$ are holomorphic frames of $L$ and $M$ respectively.
Then we have $$\int_{X'}\frac{|F|^2e^{-\varphi-\rho}}{|G|^{2(n+k+1)}}<+\infty.$$

Since  $(X,L,\varphi)$ is $L^2$-optimal and $\rho$ is smooth plurisubharmonic on $X$, we know that
  $(X,L,\varphi+\rho)$ is also $L^2$-optimal.  For the pair  $(X,L,\varphi+\rho)$, repeat the procedure of the proof of Skoda's $L^2$ division theorem by replacing  $\gamma|z|^2$ with  $\gamma\rho$ and calculating in normal coordinates, then we can find  holomorphic $(n,0)$-forms $H_1,\cdots, H_n$
such that
$$\sum_{j=1}^{n} H_j G_j=F$$
and
$$
  \int_{X} \frac{|H|^2e^{-\varphi-\rho}}{|G|^{2(n+k)}}
  \le(1+\frac{n}{k})\int_{X} \frac{|f|^2e^{-\varphi-\rho}}{|G|^{2(n+k+1)}}<+\infty.$$
Now $h_j=H_je_L\otimes e_M^{\otimes(n+k)}\in H^0 (X',K_X\otimes L\otimes (n+k)M\otimes\calI_k)$ is locally $L^2$ integrable near $A$. It follows from Lemma \ref{lem L2negligible} that $h_j\in H^0 (X,K_X\otimes L\otimes (n+k)M\otimes\calI_k)$ and we have
$$\sum_{j=1}^{n} h_j\otimes g_j=f.$$
We complete the proof.

\end{proof}


\begin{thebibliography}{Dem12}

	\bibitem[Dem93]{Dem-93JDG}  J.~P.~Demailly,
	\newblock A numerical criterion for very ample line bundles.
	\newblock {\em J. Differential Geom.}, 37(2):323--374 (1993)


	\bibitem[Dem12]{Demailly} J.~P.~Demailly,
		 	\newblock \emph{Complex analytic and differential geometry}.
		 	\newblock \url{https://www-fourier.univ-grenoble-alpes.fr/~demailly/manuscripts/agbook.pdf} (2012)
		
	
			
	\bibitem[DNW21]{DNW21} F.~Deng, J.~Ning, Z.~Wang,
	\newblock Characterizations of plurisubharmonic functions.
	\newblock {\em Sci. China Math}, 64:1959--1970 (2021)

    \bibitem[DNWZ22]{DNWZ22}
	F.~Deng, J.~Ning, Z.~Wang, X.~Zhou,
	\newblock Positivity of holomorphic vector bundles in terms of $L^p$ estimates for $\dbar$.
    \newblock {\em Math. Ann.}, 385:575--607 (2023)

    \bibitem[DWZZ24]{DWZZ} F.~Deng, Z.~Wang, L.~Zhang, X.~Zhou,
    \newblock New characterizations of plurisubharmonic functions and positivity of direct image sheaves. {\em Amer. J.  Math.} 146(3), 751-768 (2024)

    \bibitem[DZ21]{Deng-Zhang}
    F.~Deng, X.~Zhang,
	\newblock Characterizations of Curvature positivity of
	Riemannian vector bundles and convexity or pseudoconvexity of bounded domains in $\mathbb{R}^n$ or $\mathbb{C}^n$ in terms of $L^2$-estimate of $d$ of $\bar\partial$ equation.
	\newblock {\em J. Funct. Anal.}, 281(9):109184, 30.pp (2021)


	\bibitem[DP81]{Diederich-Pflug} K.~Diederich, P.~Pflug,
	\newblock \"Uber Gebiete mit vollst\"andiger K\"ahlermetrik.
	\newblock {\em Math. Ann.}, 257(2):191--198 (1981)





\bibitem[Hor65]{Hor65}
	L.~H\"ormander,
	\newblock {$L^{2}$} estimates and existence theorems for the {$\bar \partial
		$}\ operator.
	\newblock {\em Acta Math.}, 113:89--152 (1965)


	


\bibitem[LMNZ24]{Li-Meng-Ning-Zhou} Z.~Li, X.~Meng, J.~Ning, X.~Zhou,
\newblock A Converse to the Skoda $L^2$ Division Theorem,
\newblock \url{arXiv:2405.18713} (2024)





	\bibitem[LZ24]{Liu-Zhang}
	Z.~Liu, X.~Zhang,
	\newblock A new characterization of $L^2$-domains of holomorphy with null thin complements via $L^2$-optimal conditions,
	\newblock {\em J. Geom. Anal.} 266:33 (2024)
	
	

\bibitem[Nad90]{Nadel}
{A. M. Nadel},
\newblock Multiplier ideal sheaves and K\"ahler-Einstein metrics of positive scalar curvature.
\newblock {\em Proc. Nat. Acad. Sci. U.S.A.}, 86:7299--7300 (1989), and {\em Ann. of Math. (2)}, {132}(3):549--596 (1990)



	


\bibitem[Pfl74]{Pflug-1} P.~Pflug,
    \newblock \"{U}ber polynomiale {F}unktionen auf {H}olomorphiegebieten,
    \newblock {\em Math. Z.}, 139:133--139 (1974)

\bibitem[Pfl75]{Pflug} P.~Pflug,
\newblock Holomorphiegebiete, pseudokonvexe {G}ebiete und das {L}evi-{P}roblem.
\newblock {\em Lecture Notes in Mathematics}, Vol. 432, {Springer-Verlag, Berlin-New York}, (1975)





\bibitem[Siu05]{Siu} Y.~T.~Siu,
\newblock Multiplier ideal sheaves in complex and algebraic geometry,
\newblock {\em Sci. China Ser. A},  48:1--31 (2005)





	\bibitem[Sko72]{Skoda} H.~Skoda,
	\newblock Application des techniques $L^2$
	\`a la th\`eorie des id\`eaux d'une alg\`ebre de
	fonctions holomorphes avec poids.
	\newblock{\em Ann. Sci. \'Ecole Norm. Sup. (4)}
	 5:545--579 (1972)
	
	
	\end{thebibliography}
\end{document}